\newtheorem{thm}{Theorem}[section]
\theoremstyle{plain}
\newtheorem{lemma}[thm]{Lemma}
\newtheorem{prop}[thm]{Proposition}
\theoremstyle{remark}
\newtheorem{rem}[thm]{Remarks}
\newtheorem{remark}[thm]{Remark}
\newcommand{\Tone}{\mathbb{T}}
\newcommand{\trho}{\tilde{\rho}}  
\newcommand{\brho}{\bar{\rho}}
\newcommand{\R}{{\rm I \! R}}
\newcommand{\Cr}[1][r]{\mathcal{C}_{#1}}
\newcommand{\rx}{\rho_x}
\newcommand{\rxx}{\rho_{xx}}
\newcommand{\rt}{\rho_{\theta}}
\newcommand{\rtt}{\rho_{\theta \theta}}
\newcommand{\rxt}{\rho_{x \theta}}
\newcommand{\g}{\mathcal{G}}
\newcommand{\C}{\mathcal C}
\newcommand{\cH}{\mathcal H}
\numberwithin{equation}{section}
\begin{document}
\title[SD Flow of Perturbed Cylinders]{On the Flow of Non--Axisymmetric 
Perturbations of Cylinders via Surface Diffusion}

\author[J. LeCrone]{Jeremy LeCrone}
\address{Department of Mathematics \\ Kansas State University\\ Manhattan, KS USA}
\email{lecronjs@ksu.edu}%
\urladdr{http://www.math.ksu.edu/~lecronjs/}

\author[G. Simonett]{Gieri Simonett}
\address{Department of Mathematics \\ Vanderbilt University \\ Nashville, TN USA}
\email{gieri.simonett@vanderbilt.edu}%
\urladdr{http://www.vanderbilt.edu/math/people/simonett}

\subjclass[2000]{Primary 35K93, 53C44 ; Secondary 35B35, 35B32, 46T20} 

\keywords{Surface diffusion, well posedness, unbounded surfaces,
maximal regularity, nonlinear stability, implicit function theorem}%

\begin{abstract}
We study the surface diffusion flow acting on a class of general (non--axisymmetric) perturbations of cylinders $\mathcal{C}_r$ in 
${\rm I \! R}^3$. Using tools from parabolic theory on uniformly regular manifolds, and maximal
regularity, we establish existence and uniqueness of solutions to surface diffusion flow starting
from (spatially--unbounded) surfaces defined over $\mathcal{C}_r$ via scalar height functions
which are uniformly bounded away from the central cylindrical axis. Additionally, we show that $\mathcal{C}_r$ is normally stable with respect to 
$2 \pi$--axially--periodic perturbations if the radius $r > 1$,and unstable if $0 < r < 1$. Stability is also
shown to hold in settings with axial Neumann boundary conditions.
\end{abstract}

\maketitle
\section{Introduction}

The surface diffusion flow is a geometric evolution law in which the normal velocity of 
a moving surface equals the Laplace--Beltrami operator of the 
mean curvature. More precisely, we assume in the following that
$\Gamma_0$ is a closed embedded surface in $\R^3$. Then the
surface diffusion flow is governed by the law
\begin{equation}
\label{SDF}
   V(t)=\Delta_{\Gamma(t)}H_{\Gamma(t)}\,,\qquad\Gamma(0)=\Gamma_0.
\end{equation}
Here $\Gamma=\{\Gamma(t) : t\ge 0\}$ is a family of  
hypersurfaces, $V(t)$ denotes the velocity in the normal
direction of $\Gamma$ at time $t$, while $\Delta_{\Gamma(t)}$ 
and $H_{\Gamma(t)}$ stand for the Laplace--Beltrami operator 
and the mean curvature of $\Gamma(t)$ 
(the sum of the principal curvatures in our case), respectively.
Both the normal velocity and the mean curvature depend on the 
local choice of the orientation.
Here we consider the case where $\Gamma(t)$ is embedded and 
encloses a region $\Omega(t)$, and we then choose the outward 
orientation, so that $V$ is positive if $\Omega(t)$ grows and 
$H_{\Gamma(t)}$ is positive if $\Gamma(t)$ is convex with respect to $\Omega(t)$.

The unknown quantity in \eqref{SDF} is the position and the geometry of
the surface $\Gamma(t)$, which evolves in time.
Hidden in the formulation of the evolution law \eqref{SDF} is
a nonlinear partial differential equation of fourth order.
This will become more apparent below, where the equation
is stated more explicitly.

It is an interesting and significant fact that the surface diffusion
flow evolves surfaces in such a way that the volume enclosed by $\Gamma(t)$
is preserved, while the surface area decreases 
(provided, of course, these quantities are finite).
This follows from the well-known relationships
\begin{equation}
\label{volume}
\frac{d}{dt}{\rm Vol(t)} = \int_{\Gamma(t)} V(t)\,d\sigma
= \int_{\Gamma(t)}\Delta_{\Gamma(t)}H_{\Gamma(t)}\,d\sigma = 0
\end{equation}
and 
\begin{equation}
\label{area}
\begin{aligned}
\frac{d}{dt}{\rm A(t)}
= \int_{\Gamma(t)} V(t)H_{\Gamma(t)}\,d\sigma
& =  \int_{\Gamma(t)}\left(\Delta_{\Gamma(t)}H_{\Gamma(t)}\right)H_{\Gamma(t)}\,d\sigma \\
& = -\int_{\Gamma(t)}\left|{\rm grad}_{\Gamma(t)}H_{\Gamma(t)}\right|^2_{\Gamma(t)}
\,d\sigma \le 0,
\end{aligned}
\end{equation}
where ${\rm Vol(t)}$ denotes the volume of $\Omega(t)$ and $A(t)$ 
the surface area of $\Gamma(t)$, respectively. 
Hence, the preferred ultimate states for the surface diffusion flow in 
the absence of geometric constraints are spheres.
It is also interesting to note that the surface diffusion flow can 
be viewed as the $H^{-1}$--gradient flow of the area functional,
a fact that was first observed in \cite{Fi00}. 
This particular structure has been exploited in 
\cite{May00,May01} for devising numerical simulations.

The mathematical equations modeling surface diffusion go back to a 
paper by Mullins \cite{MUL57} from the 1950s, who was in turn 
motivated by earlier work of Herring \cite{HER51}.
Since then, the surface diffusion flow \eqref{SDF}
has received wide attention in the mathematical community
(and also by scientists in other fields), see for instance
Asai~\cite{As12},
Baras, Duchon, and Robert~\cite{BDR84},
Bernoff, Bertozzi, and Witelski~\cite{BBW98},
Cahn and Taylor~\cite{CT94}, 
Cahn, Elliott, and Novick--Cohen~\cite{CEN96},
Dav\'i and Gurtin~\cite{DG90},
Elliott and Garcke~\cite{EG97},
Escher, Simonett, and Mayer~\cite{EMS98},
Escher and Mucha~\cite{EMu10},
Koch and Lamm~\cite{KoLa12},
LeCrone and Simonett~\cite{LS13},
McCoy, Wheeler, and Williams~\cite{McCWW11}, 
and Wheeler~\cite{Wh11,Wh12}.
\medskip\\
The primary focus of this article is the surface diffusion flow \eqref{SDF}
starting from initial surfaces $\Gamma_0$ which are perturbations of 
an infinite cylinder 
\[
\Cr := \{(x,y,z) \in \R^3 : y^2 + z^2 = r^2, \; x \in \R \}
\] 
with radius $r > 0$, centered about the $x$--axis.
\medskip\\
The main results of this paper address
\begin{itemize}
\item[(a)] existence, uniqueness, and regularity of solutions for \eqref{SDF}
for initial surfaces $\Gamma_0$ that are (sufficiently smooth) 
perturbations of $\Cr$; 
\item[(b)] 
existence, uniqueness, and regularity of solutions for \eqref{SDF}
for initial surfaces $\Gamma_0$ that are (sufficiently smooth)
perturbations of ${\mathcal C}_{r}$ subject to periodic or 
Neumann--type boundary conditions in axial direction;
\item[(c)] 
nonlinear stability and instability results for $\Cr$ with respect to perturbations 
subject to periodic or Neumann--type boundary conditions in axial direction.
\end{itemize}
In section 2 we derive an explicit formulation of \eqref{SDF} 
for surfaces which are parameterized over $\Cr$ in normal direction by 
height functions $\rho: \Cr \to \R$, see \eqref{NASD}.
Equation \eqref{NASD} is a fourth--order, quasilinear, 
parabolic partial differential equation 
for which  well--posedness in the unbounded setting can be 
established by virtue of recent results for maximal regularity 
on uniformly regular Riemannian manifolds (c.f. \cite{SS14, Am13}). 
A proof of well--posedness is carried out in section 3 of the current article.
For other recent results regarding geometric evolution equations in 
unbounded settings, refer to Giga, Seki, Umeda \cite{GSU09, GSU11}, 
wherein the mean curvature flow is shown to close open ends of 
non--compact surfaces of revolution given appropriate decay rates
at the ends of the initial surface. The current setting differs 
from that of Giga et. al. in many ways; most notably, we consider 
surfaces which lack symmetry about the cylindrical axis; 
we control behavior at space infinity via $bc^{2,\alpha}$ regularity bounds 
(so surfaces do not become radially unbounded); and we consider the dynamics
of surfaces which are uniformly bounded away from the cylindrical axis. 

In \cite{LS13}, we studied the surface diffusion flow for the special case of 
\textit{axisymmetric} and $2\pi$--periodic perturbations $\Gamma_0$ of $\Cr$.
In this case it was shown that equilibria of \eqref{SDF} consist exactly of cylinders, 
$2k \pi$--periodic unduloids and nodoids.
In addition, we established nonlinear stability and instability 
results for cylinders as well as bifurcation results, 
where $r$ serves as a bifurcation parameter.
More specifically, it was shown that cylinders $\Cr$ are stable for $r>1$,
unstable for $r<1$, and that a subcritical bifurcation of equilibria occurs 
at $r=1$, with the bifurcating branch
consisting exactly of the family of $2\pi$--periodic unduloids.

In this paper we show that the stability and instability results of \cite{LS13} 
remain true for non--axisymmetric perturbations subject to periodic or 
Neumann--type boundary conditions in axial direction.
In particular, we show that small $2\pi$--periodic perturbations 
$\Gamma_0$ of a cylinder $\Cr$ with $r>1$ 
exist globally and converge exponentially fast to another nearby cylinder 
\[
C(\bar y,\bar z,\bar r):=\{(x,y,z)\in\R^3 : 
		(y-\bar y)^2+(z-\bar z)^2={\bar r}^2,\; x\in \R\}.
\]
It will be shown that the radius $\bar r$ is uniquely determined by the volume
of the solid enclosed by $\Gamma_0$ and bounded by the planes $x=0$ and $x=2\pi$.
In order to prove this result, we shall show that all 
equilibria $\Gamma$ of \eqref{SDF} which are $2\pi$--periodic in axial 
direction and sufficiently close to $\Cr$ (for $r>1$) are cylinders 
$C(\bar y,\bar z,\bar r)$, with $(\bar y,\bar z,\bar r)$ close to $(0,0,r)$.
This result, which is interesting by itself, 
is obtained by a center--manifold argument.
In order to prove the stability result, we demonstrate 
that every cylinder $\Cr$ with radius $r > 1$ is \textit{normally stable} 
with respect to $2 \pi$--periodic,
$bc^{2 + \alpha}$--regular perturbations. 
As discussed in section 5, these stability results also encompass 
perturbations satisfying Neumann--type boundary conditions 
in $x$-direction.

\section{Surface Diffusion Flow near Cylinders}
\label{Sec:SDFormula}

Throughout this section, take $r > 0$ fixed, and let $\Cr \subset \R^3$ 
be the (unbounded) cylinder of radius $r$ which is symmetric about the x--axis. 
Specifically,
\[
    \Cr := \{ \big( x, r \cos(\theta), r \sin(\theta) \big): x \in \R, \theta \in \Tone \},
\]
where $\Tone := [0, 2\pi]$ denotes the one--dimensional torus with 
$0$ and $2\pi$ identified and $\Tone$ is equipped with the \textit{periodic}
topology generated by the metric
\[ 
    d_{\Tone}(x,y) := \min \{|x - y|, 2 \pi - |x - y| \}, \qquad x, y \in \Tone.
\]
Further, we equip $\Cr$ with the Riemannian metric 
$g := dx^2 + r^2 \, d\theta^2$, inherited from $\R^3$ via embedding.

\begin{rem}
Two properties of $\Cr$ are easy to see immediately:
\smallskip\\
(a) $\Cr$ is a \textit{uniformly regular Riemannian manifold}, as
			defined by Amann \cite{Am14}.\\ 
			Indeed, this follows from 
			\cite[formula (3.3), Corollary 4.3, and Theorem 3.1]{Am14}, noting
			that $\Cr$ is realized as the Cartesian product of the circle $r S^1$ and $\R$.
			On a side line we note that the class of uniformly regular 
			(closed) Riemannian manifolds coincides with the class of closed, 
			complete manifolds of bounded geometry, see~\cite{DSS14}.
 \smallskip\\
(b) $\Cr$ is an equilibrium of \eqref{SDF} for every radius $r > 0$. \\
        Indeed, noting that the mean curvature $\mathcal{H}_{\Cr} \equiv 1/r$ is constant
        throughout the manifold, it follows that 
        \[
    	    \Delta_{\Cr} \mathcal{H}_{\Cr} \equiv 0,
        \]
        so that the normal velocity $V \equiv 0$ and the surface remains fixed in space.
\end{rem}

Now, consider a scalar-valued function $\rho : \Cr \rightarrow \R$ such that
\[
    \rho(p) > - r, \qquad \text{for all} \quad p \in \Cr,
\]
and define a new surface $\Gamma = \Gamma(\rho)$ \textit{over} the
reference manifold $\Cr$ as
\[
    \Gamma := \{ p + \rho(p) \nu(p): p \in \Cr \},
\]
where $\nu$ denotes the unit outer normal field over $\Cr$.
We also note that the points $p \in \Cr$ are uniquely determined
via the surface coordinates $(x, \theta) \in \R \times \Tone$, and so we also
view the height function $\rho$ as a function of the variables $x$ and $\theta$.
Thus, abusing notation, we will interchangeably refer to $\rho(p)$ and 
$\rho(x,\theta)$, where, in fact,
\[
    \rho(x,\theta) := \rho \big( (x, r \cos( \theta), r \sin( \theta) )\big), 
        \qquad (x, \theta) \in \R \times \Tone.
\]
Extending this slight abuse of notation, one easily identifies an explicit 
parametrization for $\Gamma(\rho)$ via 
\begin{equation}
\label{RadialParam}
    \varphi(x, \theta) := \big(x, r \cos(\theta), r \sin(\theta) \big) + 
                          \rho(x, \theta) \big(0, \cos(\theta), \sin(\theta) \big), 
                          \quad  (x, \theta) \in \R \times \Tone.
\end{equation}

With the above construction, we have a correspondence between height functions 
$\rho > - r$ and a particular class of embedded manifolds 
$\Gamma(\rho) \subset \R^3$ which we refer to as 
\textit{axially--definable} --- 
namely, those manifolds which admit an axial parametrization 
$\varphi > 0$ of the form \eqref{RadialParam}.
Within this class of manifolds, the geometry, time--dependent evolution, 
and regularity are determined by the height function $\rho$ itself.
We turn to the task of explicitly expressing these relationships, and deriving the 
governing equation for surface diffusion, in the following subsection.
For the purpose of expressing the necessary geometric quantities, we 
assume for the moment that $\rho = \rho(x, \theta)$ is smooth enough
for all following calculations to work.
The precise desired regularity of $\rho$ will be addressed in 
detail when we discuss existence and uniqueness of solutions.

\subsection{Geometry of $\Gamma(\rho)$}

Utilizing explicit parametrizations of the form \eqref{RadialParam}, 
we find the coefficients of the first fundamental form of $\Gamma(\rho)$ as
\[
    g_{11} = (1 + \rx^2), \qquad g_{12} = g_{21} = \rx\rt, 
			\qquad g_{22} = ((r + \rho)^2 + \rt^2),
\]
and the metric is thus given by 
$g = g_{11} \ dx^2 + 2 g_{12} \ dx d\theta + g_{22} \ d \theta^2$.
Then,
\[
    g^{11} = \frac{(r + \rho)^2 + \rt^2}{\g}, \qquad 
    g^{12} = g^{21} = \frac{-\rx \rt}{\g}, \qquad 
    g^{22} = \frac{1 + \rx^2}{\g},
\]
where the matrix $[g^{ij}]$ is the inverse of $[g_{ij}]$ and 
\begin{equation}\label{CalG}
    \g = \g(\rho) := \det (g_{ij}) = (r + \rho)^2 ( 1 + \rx^2) + \rt^2.
\end{equation}
Likewise, the second fundamental form is expressed via the coefficients
\begin{align*}
    \mathbb{I}_{11} &= \frac{(r + \rho) \rxx}{\sqrt{\g}},\\
    \mathbb{I}_{12} &= \mathbb{I}_{21} = \frac{(r + \rho) \rxt - \rx \rt}{\sqrt{\g}},\\
    \mathbb{I}_{22} &= \frac{(r + \rho)(\rtt - (r + \rho)) - 2\rt^2}{\sqrt{\g}}.
\end{align*}

The mean curvature 
$\mathcal{H}(\rho) := \mathcal{H}_{\Gamma(\rho)}$ and 
Laplace--Beltrami operator $\Delta_{\Gamma(\rho)}$ are 
expressed using well--known formulas, 
c.f. \cite{DOC92,PS13}.
From \cite[Equation (5)]{PS13},
\begin{equation}\label{MCGamma}
    \mathcal{H}(\rho) = \frac{ \rt^2 - (r + \rho) [ ( (r + \rho)^2  + \rt^2 )\rxx + 
        ( 1 + \rx^2 )\rtt - 2 \rx \rt \rxt ]}{ \g^{3/2} } \ + \ \frac{1}{\g^{1/2}} \ ,
\end{equation}
and, from \cite[Section 2.7]{PS13}, 
\begin{equation*}\label{DeltaGamma}
    \Delta_{\Gamma(\rho)} = \frac{1}{\sqrt{\g}} \left\{ \partial_x \left[ 
        \frac{(r + \rho)^2 + \rt^2}{\sqrt{\g}} \ \partial_x \, - \, 
        \frac{\rx \rt}{\sqrt{\g}} \ \partial_\theta \right]
      + \partial_\theta \left[ \frac{1 + \rx^2}{\sqrt{\g}} \ \partial_\theta \, - \, 
        \frac{\rx \rt}{\sqrt{\g}} \ \partial_x \right] \right\}.
\end{equation*}
The normal velocity of the surface $\Gamma(\rho) = \Gamma(\rho(t))$ is likewise
\begin{equation*}\label{NormalVelocityGamma}
    V_{\Gamma(\rho)} = \frac{(r + \rho) \rho_t}{\sqrt{\g(\rho)}} .
\end{equation*}

Throughout, we often streamline notation by omitting explicit reference to 
dependence upon the spatial or temporal variables; 
e.g. $\rho = \rho(t) = \rho(x, \theta) = \rho(t,p) = \rho(t,x,\theta)$
and $\Gamma(\rho) = \Gamma(\rho(t))$, etc.

\subsection{The Surface Diffusion Flow}

With the formulas from the last subsection, we can now express the governing equation
for surface diffusion of $\Gamma(\rho)$ as an evolution equation for the 
height function $\rho$ alone. 
Thus, defining the (formal) operator
\begin{equation}\label{EvolOperator}
\begin{split}
	G(\rho) := 
		\frac{1}{(r + \rho)} &\left\{ \partial_x \left[ \frac{(r + \rho)^2 + \rt^2}
				{\sqrt{\g(\rho)}} \ \partial_x \mathcal{H}(\rho) 
		- \frac{\rx \rt}{\sqrt{\g(\rho)}} \ 
				\partial_\theta \mathcal{H}(\rho) \right] \right.\\
	& \quad + \ \partial_\theta \left.\left[ 
		\frac{1 + \rx^2}{\sqrt{\g(\rho)}} \ \partial_\theta 	
			\mathcal{H}(\rho) 
		- \frac{\rx \rt}{\sqrt{\g(\rho)}} \ \partial_x \mathcal{H}(\rho) 
			\right] \right\} \ ,
\end{split}
\end{equation}
we arrive at the expression
\begin{equation}
\label{NASD}
\begin{cases}
    \rho_t(t,p) = [G(\rho(t))](p), & \text{for $t > 0, \, p \in \Cr$,}\\
		\rho(0) = \rho_0, & \text{on $\Cr$},
\end{cases}
\end{equation}
for the surface diffusion flow for axially--definable surfaces $\Gamma(\rho)$.

\begin{remark}
The notation $[G(\rho(t))](p)$ reflects the functional--analytic framework
we use to address equation \eqref{NASD}.
For each $t \ge 0$, we consider $\rho(t)$ as an element of an appropriate Banach
space of regular functions defined on $\Cr$.
Then, $G$ maps $\rho(t)$ to another function defined over $\Cr$
which we then evaluate pointwise using the notation $[G(\rho(t))](p)$. 
\end{remark}

\section{Existence and Uniqueness of Solutions}
\label{Sec:WellPosedness}

In this section we establish existence and uniqueness of solutions for \eqref{NASD}.
One essential tool that we use throughout is the property of maximal 
regularity, also called \emph{optimal regularity}. Maximal regularity
has received considerable attention in connection with nonlinear 
parabolic partial differential equations, 
c.f. \cite{Am95, An90, CS01, LUN95, PSZ09}.

\subsection{Maximal Regularity}\label{MaxReg}

Although maximal regularity can be developed in a 
more general setting, we will focus on the setting of 
\textit{continuous} maximal regularity and 
direct the interested reader to the references 
\cite{Am95, LUN95} for a general development of the theory.

Let $\mu \in (0,1], \, J := [0,T]$, for some $T > 0$, 
and let $E$ be a (real or complex) Banach space. 
Following the notation of \cite{CS01}, we define spaces 
of continuous functions on $\dot{J} := J \setminus \{ 0 \}$ 
with prescribed singularity at 0 as
\begin{equation}\label{Eqn:SingularContinuity}
\begin{aligned}
	&BU\!C_{1 - \mu}(J,E) := 
		\bigg\{ u \in C(\dot{J}, E): [t \mapsto t^{1 - \mu} u(t)] \in BU\!C(\dot{J},E) 
			\; \text{and} \\ 
	& \hspace{9em} \lim_{t \rightarrow 0^+} t^{1 - \mu} \| u(t) \|_E = 0 \bigg\}, 
		\quad \mu \in (0,1)\\
	&\| u \|_{B_{1 - \mu}} := \sup_{t \in J} t^{1 - \mu} \| u(t) \|_E,
\end{aligned}
\end{equation}
where $BU\!C$ denotes the space consisting of bounded, uniformly continuous functions.
We also define the subspace
\[
	BU\!C_{1 - \mu}^1(J,E) := \left\{ u \in C^1(\dot{J},E) : 
		u, \dot{u} \in BU\!C_{1 - \mu}(J, E) \right\}, \quad \mu \in (0,1)
\]
and we set
\[
BU\!C_0 (J,E) := BU\!C(J,E), \qquad BU\!C^1_0(J,E) := BU\!C^1(J,E).
\]
If $J = [0, a)$ for $a > 0$, 
then we set
\begin{align*}
	C_{1 - \mu}(J,E) &:= 
		\{ u \in C(\dot{J},E): u \in BU\!C_{1 - \mu}([0,T],E), \quad T < \sup J \},\\
	C^1_{1 - \mu}(J,E) &:= 
		\{ u \in C^1(\dot{J},E): u, \dot{u} \in C_{1 - \mu}(J,E) \}, \qquad \mu \in (0,1],
\end{align*}
which we equip with the natural Fr\'echet topologies induced by 
$BU\!C_{1 - \mu}([0,T],E)$ and $BU\!C_{1 - \mu}^1([0,T],E)$, respectively.

If $E_1$ and $E_0$ are a pair of Banach spaces such that 
$E_1$ is continuously embedded in $E_0$,
denoted $E_1 \hookrightarrow E_0$, we set
\begin{equation}\label{Eqn:MaxRegSpaces}
\begin{split}
	&\mathbb{E}_0(J) := BU\!C_{1 - \mu}(J,E_0), \qquad \mu \in (0,1],\\
	&\mathbb{E}_1(J) := BU\!C^1_{1 - \mu}(J,E_0) \cap BU\!C_{1 - \mu}(J,E_1),
\end{split}
\end{equation}
where $\mathbb{E}_1(J)$ is a Banach space with the norm 
\[
	\| u \|_{\mathbb{E}_1(J)} := \sup_{t \in \dot{J}} t^{1 - \mu} 
		\Big( \| \dot{u}(t) \|_{E_0} + \| u(t) \|_{E_1} \Big).
\]
It follows that the trace operator $\gamma: \mathbb{E}_1(J) \rightarrow E_0$, defined by 
$\gamma v := v(0)$, is well-defined and we denote by $\gamma \mathbb{E}_1$ the image of 
$\gamma$ in $E_0$, which is itself a Banach space when equipped with the norm
\[
	\| x \|_{\gamma \mathbb{E}_1} := \inf \Big\{ \| v \|_{\mathbb{E}_1(J)}: 
		v \in \mathbb{E}_1(J) \, \text{and} \, \gamma v = x \Big\}.
\]

Given $B \in \mathcal{L}(E_1, E_0)$, closed as an operator on $E_0$, 
we say $\big( \mathbb{E}_0(J), \mathbb{E}_1(J) \big)$ 
is a \emph{pair of maximal regularity} for $B$ and write 
$B \in \mathcal{MR}_{\mu}(E_1, E_0)$, if 
\[
	\left( \frac{d}{dt} + B, \, \gamma \right) \in 
		\mathcal{L}_{{\rm isom}}(\mathbb{E}_1(J), \mathbb{E}_0(J) 
		\times \gamma \mathbb{E}_1), \qquad \mu \in (0, 1),
\]
where $\mathcal{L}_{{\rm isom}}$ denotes the set of bounded linear isomorphisms. 
In particular, $B \in \mathcal{MR}_{\mu}(E_1, E_0)$ 
if and only if for every $(f, u_0) \in \mathbb{E}_0(J) \times \gamma \mathbb{E}_1$, 
there exists a unique solution $u \in \mathbb{E}_1(J)$ to the inhomogeneous Cauchy problem 
\[
\begin{cases}
	\dot{u}(t) + Bu(t) = f(t), &t \in \dot{J},\\
	u(0) = u_0.
\end{cases}
\]
Moreover, in the current setting, it follows that 
$\gamma \mathbb{E}_1 \, \dot{=} \, (E_0, E_1)_{\mu, \infty}^0$,
i.e. the trace space $\gamma \mathbb{E}_1$ is topologically 
equivalent to the noted continuous 
interpolation spaces of Da Prato and Grisvard, c.f. \cite{Am95, CS01, DPG79, LUN95}.

\medskip

We shall establish well--posedness of \eqref{NASD} in the setting
of \textit{little--H\"older} regular height functions $\rho$. 
These results seem to constitute the first existence and uniqueness results for the 
surface diffusion flow acting on unbounded (closed) surfaces.
Note that we enforce minimal conditions on the ends of the initial surface,
namely we look at surfaces which are uniformly bounded away from the 
axis of definition (i.e. the x--axis in our current setting) and
satisfy minimal regularity assumptions.

For the convenience of the reader, we include a brief definition
of the little--H\"older spaces on $\Cr$.
For $\alpha \in (0,1)$ and $U \subset \R^n$ open, we define 
$bc^{\alpha}(U)$ to be the closure of the bounded smooth functions $BC^{\infty}(U)$
in the topology of the bounded H\"older functions $BC^{\alpha}(U)$. 
Further, for $k \in \mathbb{N}$, we define $bc^{k + \alpha}(U)$
to be the space of $k$--times continuously differentiable functions
such that the $k^{\text{th}}$--order derivatives are in $bc^{\alpha}(U)$.
We then define the space of $(k + \alpha)$ little--H\"older regular 
functions on $\Cr$, $bc^{k + \alpha}(\Cr)$, via an atlas of local charts
and a subordinate localization system. For more details regarding function spaces 
on uniformly regular (and singular) Riemannian manifolds see \cite{Am13, SS14}. 

\subsection{Quasilinear Structure and Maximal Regularity}
Expanding terms of \eqref{EvolOperator}, it is straight--forward to see 
that $G(\rho)$ is a fourth--order quasilinear operator of the form
\[
  G(\rho) = -\mathcal{A}(\rho)\rho + F(\rho) :=
		- \left( \sum_{|\beta| = 3, 4} b_{\beta}(\rho, \partial^1\rho, \partial^2\rho)
				\ \partial^{\beta} \rho \right) 
    + F(\rho, \partial^1\rho, \partial^2\rho),
\]
where $\beta = (\beta_1, \beta_2) \in \mathbb{N}^2$ is a multi--index, 
$|\beta| := \beta_1 + \beta_2$ its length, 
$\partial^{\beta} = \partial^{(\beta_1, \beta_2)} 
:= \partial_x^{\beta_1} \partial_\theta^{\beta_2},$
and $\partial^k \rho$, for $k \in \mathbb{N}$,
denotes the collection of all derivatives 
$\partial^{\beta} \rho$ for $|\beta| = k$. 
This quasilinear structure plays an important role in our well--posedness 
results below, for which we must look more closely at the fine properties
of the variable coefficient linear operator $\mathcal{A}(\rho)$.

Expanding terms, we have the following highest--order variable 
coefficients for the linear operator $\mathcal{A}(\rho)$:
\begin{align*}
    b_{(4,0)}(\rho) &=  \ \frac{((r + \rho)^2 + \rt^2)^2}{\g^2},\\
    b_{(3,1)}(\rho) &= - \ \frac{4 \rho_x \rt ((r + \rho)^2 + \rt^2)}{\g^2},\\
    b_{(2,2)}(\rho) &=  \ \frac{2((r + \rho)^2 + \rt^2)(1 + \rx^2) + 4 \rx^2 \rt^2}{\g^2},\\
    b_{(1,3)}(\rho) &= - \ \frac{4 \rx \rt (1 + \rx^2)}{\g^2},\\
    b_{(0,4)}(\rho) &=  \ \frac{(1 + \rx^2)^2}{ \g^2}.
\end{align*}
Therefore, the principal symbol $\sigma [\mathcal{A}(\rho)]$ of the 
linear operator $\mathcal{A}$ satisfies
\begin{align}\label{SymbolOfA}
\sigma [\mathcal{A}(\rho)](p, \xi ) 
    &:= \sum_{j + k = 4} [b_{(j,k)}(\rho)](p) (i \xi_1)^j (i \xi_2)^k \nonumber\\
    &=  \sum_{j+k=4}[b_{(j,k)}(\rho)](p) \xi_1^j \xi_2^k \nonumber\\
    &= \frac{1}{\g^2} \bigg( ((r + \rho)^2 + \rt^2) \xi_1^2 + 
	 (1 + \rx^2) \xi_2^2 - 2 \rx \rt \xi_1 \xi_2 \bigg)^2\\
	&\ge  \frac{1}{\g^2} \bigg( (r + \rho)^2 \xi_1^2 + \xi_2^2 \bigg)^2\nonumber.
\end{align}

With these quantities explicitly expressed, we will show that
$\mathcal{A}$ satisfies the property of continuous maximal regularity on $\Cr$. 
Well--posedness of \eqref{NASD} then follows by exploiting the quasilinear 
structure of the parabolic equation.

\begin{prop}[maximal regularity]\label{UnboundedMaxReg}
Fix $\varepsilon > 0, \; \alpha \in (0,1)$, and take $\mu \in [1/2, 1]$ so that 
$4 \mu + \alpha \notin \mathbb{Z}$. 
Further, let
$V_{\mu} := bc^{4 \mu + \alpha}(\Cr) \cap [\rho > \varepsilon - r]$
be  the set of admissible height functions. Then  it follows that
\[
	(\mathcal{A}, F) \in C^{\omega} \bigg(V_{\mu}, 
		\mathcal{M}_\nu \big( bc^{4 + \alpha}(\Cr), 
			bc^\alpha(\Cr) \big) \times bc^{\alpha}(\Cr) \bigg),
\]
where $C^{\omega}$ denotes the space of real analytic mappings between 
(open subsets of) Banach spaces.
\end{prop}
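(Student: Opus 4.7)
The plan is to decompose the proposition into two complementary claims: (i) the coefficient maps defining $\mathcal{A}(\rho)$ and $F(\rho)$ depend real analytically on $\rho$ with values in $\mathcal{L}(bc^{4+\alpha}(\Cr), bc^\alpha(\Cr))$ and $bc^\alpha(\Cr)$ respectively, and (ii) for every admissible $\rho$, the operator $\mathcal{A}(\rho)$ belongs to the maximal regularity class $\mathcal{M}_\nu(bc^{4+\alpha}(\Cr), bc^\alpha(\Cr))$. Once both are in hand, openness of the maximal regularity class under suitable perturbations implies that the composition $\rho \mapsto \mathcal{A}(\rho)$ inherits analyticity as a map into $\mathcal{M}_\nu$.

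For step (i), note that the condition $\mu \geq 1/2$ guarantees the embedding $bc^{4\mu+\alpha}(\Cr) \hookrightarrow bc^{2+\alpha}(\Cr)$, so that the first and second derivatives of $\rho$ lie in $bc^\alpha(\Cr)$. Inspection of \eqref{EvolOperator} together with the explicit coefficients $b_{(j,k)}(\rho)$ shows that every coefficient of $\mathcal{A}(\rho)$ and the lower-order term $F(\rho)$ is a rational expression in $\rho, \rx, \rt, \rxx, \rxt, \rtt$ whose denominator is a power of $\g(\rho)$. The admissibility constraint $\rho > \varepsilon - r$ enforces $r + \rho \geq \varepsilon$, whence $\g(\rho) \geq \varepsilon^2$, so the rational functions are compositions of analytic scalar maps with the derivatives of $\rho$. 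Because $bc^\alpha(\Cr)$ is a Banach algebra under pointwise multiplication and inversion of bounded-below elements is analytic, each such coefficient map $V_\mu \to bc^\alpha(\Cr)$ is real analytic; multiplying by the fourth-order differential operators $\partial^\beta \in \mathcal{L}(bc^{4+\alpha}(\Cr), bc^\alpha(\Cr))$ preserves analyticity, yielding the required analytic dependence of $\mathcal{A}$ into $\mathcal{L}(bc^{4+\alpha}(\Cr), bc^\alpha(\Cr))$ and of $F$ into $bc^\alpha(\Cr)$.

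For step (ii), the ellipticity estimate \eqref{SymbolOfA} on the principal symbol shows that $\mathcal{A}(\rho)$ is uniformly normally elliptic of order four on $\Cr$, with the ellipticity constant depending only on $\varepsilon$ and on $bc^{2+\alpha}$-bounds for $\rho$. Combined with the fact that all coefficients lie in $bc^\alpha(\Cr)$ and that $\Cr$ is uniformly regular in the sense of Amann (as noted in Remark~2.1), we are in position to invoke the continuous maximal regularity theorems for fourth-order elliptic operators on uniformly regular Riemannian manifolds from \cite{Am13} and \cite{SS14}. These results provide exactly the statement $\mathcal{A}(\rho) \in \mathcal{M}_\nu(bc^{4+\alpha}(\Cr), bc^\alpha(\Cr))$ in the little-H\"older setting. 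The value of $\nu$ is governed by the relative position of $4\mu + \alpha$ among the standard scale, explaining the exclusion $4\mu + \alpha \notin \mathbb{Z}$.

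Finally, to upgrade analyticity of $\rho \mapsto \mathcal{A}(\rho)$ into $\mathcal{L}(bc^{4+\alpha}(\Cr), bc^\alpha(\Cr))$ to analyticity into $\mathcal{M}_\nu$, one uses that $\mathcal{M}_\nu$ is open in the space of bounded linear operators (a perturbation property proved in \cite{CS01, LUN95}), so the inclusion $\mathcal{M}_\nu \hookrightarrow \mathcal{L}(bc^{4+\alpha}(\Cr), bc^\alpha(\Cr))$ is a topological embedding and analyticity transfers. I expect the chief technical obstacle to be the careful bookkeeping for step (ii): verifying that the quantitative hypotheses of the maximal regularity theorem on uniformly regular manifolds (uniform lower bound on $\g(\rho)$, uniform bounds on the coefficients and their H\"older moduli on an atlas for $\Cr$) are all controlled by a single bound on $\rho$ in $V_\mu$, so that the constants in the maximal regularity estimate stay uniform on bounded subsets of $V_\mu$; the algebraic ellipticity estimate is explicit and straightforward, but it is this uniformity that permits the analytic dependence into $\mathcal{M}_\nu$.
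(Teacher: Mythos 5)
Your proposal follows essentially the same route as the paper: establish analyticity of $\rho\mapsto\mathcal{A}(\rho)$ and $\rho\mapsto F(\rho)$ into $\mathcal{L}(bc^{4+\alpha}(\Cr),bc^\alpha(\Cr))$ and $bc^\alpha(\Cr)$ via the Banach-algebra structure and analytic inversion, invoke openness of $\mathcal{M}_\nu$ in $\mathcal{L}$ (the paper cites [CS01, Lemma~2.5(a)]) to transfer analyticity, and verify membership in the maximal regularity class by checking uniform strong ellipticity of the principal symbol so that [SS14, Theorem~3.6] applies on the uniformly regular manifold $\Cr$. The only minor slips are cosmetic: the paper speaks of ``uniformly strongly elliptic'' rather than ``normally elliptic'', and the exclusion $4\mu+\alpha\notin\mathbb{Z}$ serves to identify the trace/interpolation space, not to determine $\nu$; neither affects the correctness of the argument.
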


\begin{proof}
The regularity of $\mathcal{A}(\cdot)$ and $F(\cdot)$ is a consequence of the
analyticity of the following mappings: (which is easy to show using
standard methods in nonlinear analysis and theory of function spaces)
\begin{align*}
	[\rho \mapsto 1/\rho] &: V_0 \rightarrow bc^{\alpha}(\Cr),\\
	[\rho \mapsto \partial^{\beta} \rho] &: bc^{\sigma + |\beta|}(\Cr) \rightarrow bc^{\sigma}(\Cr),\\
	[(\rho, h) \mapsto \rho h] &: bc^{\alpha}(\Cr) \times bc^{\alpha}(\Cr) \rightarrow bc^{\alpha}(\Cr), 
\end{align*}
where $V_0 := bc^\alpha(\Cr) \cap [\rho > \varepsilon - r]$.
Additionally, one notes that the regularity of 
$\mathcal{A} : V_{\mu} \rightarrow \mathcal{L}(bc^{4 + \alpha}(\Cr), bc^{\alpha}(\Cr))$
is determined by the regularity of the coefficients 
$b_{\beta}: V_{\mu} \rightarrow bc^{\alpha}(\Cr)$. 
Then, one is left only to show that $\mathcal{A}(\rho)$ is in the denoted
maximal regularity class $\mathcal{M}_{\nu}$ for $\rho \in V_{\mu}$.
Note that $\mathcal{M}_{\nu}(bc^{4 + \alpha}(\Cr), bc^\alpha(\Cr))$ is an open subset of 
$\mathcal{L}(bc^{4 + \alpha}(\Cr), bc^\alpha(\Cr))$
(by \cite[Lemma 2.5(a)]{CS01}, for instance); thus the regularity 
of $\mathcal{A}$ mapping into $\mathcal{M}_{\nu}(bc^{4 + \alpha}(\Cr), bc^\alpha(\Cr)$ is
determined by the regularity of $\mathcal{A}$ mapping into 
$\mathcal{L}(bc^{4 + \alpha}(\Cr), bc^\alpha(\Cr))$.

To show $\mathcal{A}(\rho) \in \mathcal{M}_{\nu}(bc^{4 + \alpha}(\Cr), bc^\alpha(\Cr))$,
by \cite[Theorem 3.6]{SS14}, it suffices to show that $\mathcal{A}(\rho)$
is uniformly strongly elliptic on $\Cr$; i.e. we need to show that there exist
constants $0 < c_1 < c_2$ such that
\[
    c_1 \le Re \big( \sigma [ \mathcal{A}(\rho)](p , \xi ) \big) \le c_2, 
	\quad \text{for } (p, \xi) \in \Cr \times \R^2, 
	\quad \text{with } |\xi| = 1 \; .
\]
However, these bounds are obvious from the expression \eqref{SymbolOfA}
and the assumption that admissible functions $\rho$ are uniformly bounded
from below and above on $\Cr$.
\end{proof}

\subsection{Well--Posedness of \eqref{NASD}}
We now prove existence and uniqueness of solutions to \eqref{NASD}. 
Due to the non--compact setting, we must take care of the behavior of 
the height function as the axial variable approaches $\pm \infty$. 
For this purpose, we concentrate on surfaces which remain uniformly
bounded away from the x--axis (i.e. height function $\rho > -r + \varepsilon$
for $\rho : \Cr \rightarrow \R$ and $\varepsilon > 0$).
Also note that functions in the spaces $bc^{\sigma}(\Cr)$ are 
bounded from above by definition. 

\begin{prop}[existence and uniqueness]\label{UnboundedLocalSolutions}
Fix $\varepsilon > 0, \; \alpha \in (0,1),$ and take
$\mu \in [1/2, 1]$ so that $4\mu + \alpha \notin \mathbb{Z}$.
For each initial value 
\[
    \rho_0 \in V_\mu := bc^{4 \mu + \alpha}(\Cr) \cap [\rho > \varepsilon - r],
\]
there exists a unique maximal solution to \eqref{NASD}
\[
    \rho(\cdot, \rho_0) \in C^1_{1 - \mu} \big(J(\rho_0), bc^{\alpha}(\Cr) \big) \cap 
        C_{1 - \mu} \big(J(\rho_0), bc^{4 + \alpha}(\Cr) \big),
\]
where $J(\rho_0) := [0, t^+(\rho_0)) \subset \R_+$ denotes the 
maximal interval of existence for initial data $\rho_0$.
Further, it follows that 
\[
    \mathcal{D} := \bigcup_{\rho_0 \in V_\mu} J(\rho_0) \times \{ \rho_0 \}
\]
is open in $\R_+ \times V_\mu$ and the map
$[(t, \rho_0) \mapsto \rho(t, \rho_0)]$ defines an analytic semiflow on $V_{\mu}$.
Moreover, if 
the solution $\rho(\cdot,\rho_0)$ satisfies:
\begin{itemize}
		\vspace{2mm}
		\item[{\rm (i)}] $\rho(\cdot,\rho_0) \in UC(J(\rho_0),bc^{4\mu+\alpha}(\Cr))$, and
		\vspace{2mm}
		\item[{\rm (ii)}] there exists $\delta > 0$ so that 
			${\rm dist}_{bc^{4\mu+\alpha}(\Cr)}(\rho(t,\rho_0), \partial V_{\mu}) > \delta$
			for all $t \in J(\rho_0)$.
\end{itemize}
Then it must hold that $t^+(\rho_0) = \infty$ and so $\rho(\cdot,\rho_0)$ is a 
global solution of $\eqref{NASD}$.
\end{prop}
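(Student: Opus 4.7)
The plan is to recast \eqref{NASD} as an abstract quasilinear Cauchy problem
\[
    \dot{\rho}(t) + \mathcal{A}(\rho(t))\rho(t) = F(\rho(t)), \qquad \rho(0) = \rho_0,
\]
on the Banach couple $(E_1,E_0) := (bc^{4+\alpha}(\Cr), bc^\alpha(\Cr))$, with admissible states in the open set $V_\mu \subset \gamma \mathbb{E}_1 \doteq (E_0,E_1)^0_{\mu,\infty}$. Proposition \ref{UnboundedMaxReg} supplies exactly the two ingredients needed by the abstract theory: $(\mathcal{A},F)$ is real analytic from $V_\mu$ into $\mathcal{M}_\nu(E_1,E_0)\times E_0$, and for each $\rho\in V_\mu$ the operator $\mathcal{A}(\rho)$ has continuous maximal regularity on the pair $(E_1,E_0)$. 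The strategy is then to invoke the general quasilinear existence theorem of Cl\'ement--Simonett \cite{CS01} (in the form appropriate for the little--H\"older scale over a uniformly regular manifold, cf.\ \cite{SS14}).

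Concretely, I would fix $\bar{\rho}\in V_\mu$ near $\rho_0$ and linearize around it, writing the equation as
\[
    \dot{u} + \mathcal{A}(\bar{\rho})u = F(\rho) + \big(\mathcal{A}(\bar{\rho})-\mathcal{A}(\rho)\big)\rho,
\]
then apply the isomorphism $(\partial_t + \mathcal{A}(\bar{\rho}),\gamma)\colon \mathbb{E}_1(J)\to \mathbb{E}_0(J)\times\gamma\mathbb{E}_1$ coming from maximal regularity to recast the problem as a fixed--point equation on a small ball in $\mathbb{E}_1(J)$. Lipschitz estimates for $\mathcal{A}$ and $F$ near $\bar{\rho}$, combined with the smoothing factor $t^{1-\mu}$ built into the $BUC_{1-\mu}$ norm, give a strict contraction on short time intervals; uniqueness and analytic dependence on $\rho_0$ (hence the analytic semiflow structure and openness of $\mathcal{D}$) follow from the parameter--dependent version of the contraction, equivalently from an implicit function theorem applied to the data--to--solution map. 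Extending the local solution maximally in time produces $J(\rho_0)=[0,t^+(\rho_0))$ with the asserted regularity.

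For the continuation criterion, suppose $t^+(\rho_0)<\infty$ while (i) and (ii) hold. Condition (i) lets us define $\rho(t^+,\rho_0)\in bc^{4\mu+\alpha}(\Cr)$ as a uniform limit, and condition (ii) guarantees that this limit lies in $V_\mu$ at positive distance from $\partial V_\mu$. Restarting the local existence result at $\rho(t^+,\rho_0)$ produces a solution on $[t^+,t^++\tau)$ which, by uniqueness, glues with $\rho(\cdot,\rho_0)$ to extend it past $t^+$, contradicting maximality; hence $t^+(\rho_0)=\infty$. The principal obstacle is verifying that the maximal regularity theorem applies on the \emph{non--compact} manifold $\Cr$ in the little--H\"older scale, but this has already been absorbed into Proposition \ref{UnboundedMaxReg} via the uniform regularity of $\Cr$ and the ellipticity bound \eqref{SymbolOfA}, so the remainder of the argument proceeds along standard quasilinear lines.
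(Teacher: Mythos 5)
Your proposal is correct and follows essentially the same route as the paper: recast \eqref{NASD} as an abstract quasilinear Cauchy problem, feed Proposition~\ref{UnboundedMaxReg} into the Cl\'ement--Simonett continuous maximal regularity machinery (using the identification of $bc^{4\mu+\alpha}(\Cr)$ with the continuous interpolation space $(E_0,E_1)^0_{\mu,\infty}$ from \cite{SS14}), and obtain local existence, uniqueness, openness of $\mathcal D$, and the analytic semiflow from \cite{CS01}, with the continuation criterion reproducing what \cite[Theorem~4.1(c)]{CS01} asserts. The only detail you omit is that the paper also invokes \cite{An90} for the semiflow property in the borderline case $\mu=1$, where there is no initial singularity; otherwise your elaboration of the fixed--point and restart arguments is a correct unpacking of the references the paper cites.
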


\begin{remark}
Condition (ii) of the proposition can also be interpreted 
by explicitly identifying the boundary of the admissible set $V_{\mu}$. In particular, 
$\rho(\cdot,\rho_0)$ remains bounded away from $\partial V_{\mu}$ if and only if
there exists some $0 < M < \infty$ such that, for all $t \in J(\rho_0)$ 
\begin{itemize}
	\vspace{2mm}
	\item[{\rm (ii.a)}] $\rho(t,\rho_0)(p) \ge 1/M + \varepsilon - r$ for all $p \in \Cr$, and
	\vspace{2mm}
	\item[{\rm (ii.b)}] $\| \rho(t,\rho_0) \|_{bc^{4 \mu + \alpha}(\Cr)} \le M$.	
\end{itemize}
\end{remark}

\begin{proof}
Existence of maximal solutions is proved in the same way as 
Proposition 2.2 of \cite{LS13}, whereas the claim for global solutions
differs slightly from Proposition 2.3 of \cite{LS13} due to the fact that we 
do not have compact embedding of little--H\"older spaces over the non--compact
manifold $\Cr$. In particular, well--posedness
follows from \cite[Theorems 3.1 and 4.1(c)]{CS01},
\cite[Proposition 2.2(c)]{SS14}, and 
Proposition~\ref{UnboundedMaxReg},
while the semiflow properties follow from \cite{CS01}
when $\mu < 1$ and from \cite{An90} in case $\mu = 1$.
\end{proof}

\subsection{Well--Posedness: Axially Periodic Surfaces}\label{SubsecPerWP}

In order to address the stability of cylinders under the flow of \eqref{NASD} ---
for which we will use explicit information regarding the spectrum of the 
linearization of $G$ --- we restrict our setting  to 
one within which spectral calculations are tractable.

For $a > 0$ fixed, define the axial shift operator 
\[
    T_a := \big[ x \mapsto x + a \big]: \R \rightarrow \R,
\]
which naturally acts on functions $\rho \in bc^{\sigma}(\Cr)$ as
\[
    [T_a \rho](x, \theta) = \rho(x + a, \theta).
\]
We define
\[
    bc^{\sigma}_{a,per}(\Cr) := \{ \rho \in bc^{\sigma}(\Cr): T_a \rho =\rho \},
\]
which we refer to as axially $a$--periodic little--H\"older functions on $\Cr$. 
It is a straight--forward exercise to see that $bc^{\sigma}_{a,per}(\Cr)$
forms a closed subspace of $bc^{\sigma}(\Cr)$, and is hence a Banach space.
We thus consider the properties of the surface diffusion evolution operator
$G$ as it acts on the axially periodic spaces.

\begin{prop}[$G$ preserves periodicity]
If $\rho \in bc^{4 + \alpha}_{a,per}(\Cr) \cap [\rho > -r]$, then
$G(\rho) \in bc^{\alpha}_{a,per}(\Cr)$.
\end{prop}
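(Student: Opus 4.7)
The plan is to verify two things about $G(\rho)$: first, that it lies in $bc^{\alpha}(\Cr)$, and second, that it is axially $a$-periodic. Since periodicity is defined through the shift operator $T_a$, the second point amounts to checking that $G$ commutes with $T_a$.

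For the regularity claim, I would first observe that although the membership $\rho\in bc^{4+\alpha}_{a,per}(\Cr)\cap[\rho>-r]$ is stated only with the pointwise lower bound $\rho>-r$, the periodicity in $x$ together with compactness of $\Tone$ means $\rho$ attains its infimum on the fundamental domain $[0,a]\times\Tone$. Hence there is some $\varepsilon>0$ with $\rho\ge -r+\varepsilon$ uniformly on $\Cr$, so $\rho$ lies in the admissible set $V_\mu$ of Proposition~\ref{UnboundedMaxReg} with $\mu=1$ (note $4\mu+\alpha=4+\alpha\notin\mathbb{Z}$). Writing $G(\rho)=-\mathcal{A}(\rho)\rho+F(\rho)$ and applying Proposition~\ref{UnboundedMaxReg} then yields $\mathcal{A}(\rho)\rho\in bc^{\alpha}(\Cr)$ and $F(\rho)\in bc^{\alpha}(\Cr)$, hence $G(\rho)\in bc^{\alpha}(\Cr)$.

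For the periodicity claim, the core observation is that the shift $T_a$ acts only on the $x$--variable, so it commutes with both $\partial_x$ and $\partial_\theta$. Inspecting the explicit formula \eqref{EvolOperator} together with \eqref{MCGamma} and \eqref{CalG}, one sees that $G(\rho)$ is built by pointwise algebraic operations (sums, products, quotients with nowhere--vanishing denominator $\g(\rho)^{k/2}$) from $\rho$ and its partial derivatives of order at most four. None of these operations has any explicit dependence on $x$ or $\theta$. Consequently, for any smooth enough $\rho$ and any shift $T_a$,
\[
G(T_a\rho)=T_a G(\rho).
\]
Specializing to $\rho\in bc^{4+\alpha}_{a,per}(\Cr)$, which satisfies $T_a\rho=\rho$, gives $T_aG(\rho)=G(\rho)$, i.e. $G(\rho)$ is $a$--periodic in $x$. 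Combined with the previous step, this places $G(\rho)$ in the closed subspace $bc^{\alpha}_{a,per}(\Cr)$.

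There is no serious obstacle here: the only thing that needs a moment's care is the passage from the pointwise lower bound $\rho>-r$ to the uniform bound needed to invoke Proposition~\ref{UnboundedMaxReg}, which is supplied for free by the periodicity. The commutation of $G$ with $T_a$ is structural and can be made rigorous either by inspection of \eqref{EvolOperator}, or more cleanly by noting that $T_a$ is an isometry of $(\Cr,g)$ preserving the outward normal field, so it intertwines the geometric quantities $\mathcal{H}(\rho)$, $\Delta_{\Gamma(\rho)}$, and $V_{\Gamma(\rho)}$ with their counterparts for $T_a\rho$.
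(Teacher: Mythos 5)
Your proposal is correct and follows essentially the same route as the paper for the periodicity claim: it suffices to check $T_aG=GT_a$, which the paper also reduces to the commutativity of $T_a$ with the elementary building blocks $[\rho\mapsto 1/\rho]$, $[\rho\mapsto\partial^\beta\rho]$, and $[(\rho,h)\mapsto\rho h]$. You additionally spell out the regularity claim $G(\rho)\in bc^\alpha(\Cr)$ by invoking Proposition~\ref{UnboundedMaxReg} after observing that periodicity upgrades the pointwise bound $\rho>-r$ to a uniform one; the paper leaves this step implicit, so your extra paragraph is a harmless (indeed slightly more complete) addition rather than a divergence in method.
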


\begin{proof}
It suffices to show that $T_a$ commutes with the nonlinear operator
$G$, since $\rho$ $a$--periodic would then imply 
$T_a G(\rho) = G( T_a \rho) \equiv G(\rho)$, as desired.
The fact that $T_a$ indeed commutes with $G$ follows directly from the
commutativity of $T_a$ with the operations:
\[
    [\rho \mapsto 1/\rho], \quad 
    [\rho \mapsto \partial^{\beta} \rho], \quad \text{and} \quad
    [(\rho, h) \mapsto \rho h], 
\]
where, as before, $\partial^{\beta} = \partial^{(\beta_1, \beta_2)} 
= \partial_x^{\beta_1} \partial_\theta^{\beta_2}$.
\end{proof}

\begin{remark}
The fact that $G$ preserves periodicity can also be seen directly
from the geometric setting. 
In particular, recall that $G$ was constructed by modeling
the evolution of the surface $\Gamma(\rho)$ via \eqref{SDF},
which depends only upon the geometry of the surface $\Gamma(\rho)$ itself.
Thus, if $\rho$ is periodic, then $\Gamma(\rho)$, and all relevant
geometric quantities on $\Gamma(\rho)$ must also be periodic,
and hence the evolution cannot break this periodicity.
\end{remark}

The well--posedness results for \eqref{NASD} simplify slightly 
when restricted to periodic surfaces, owing to the fact that
the evolution is determined by the restriction to one interval
of periodicity. 
In essence, periodic surfaces are expressed entirely in
a compact setting.

\begin{prop}[periodic well--posedness]\label{PeriodicWellPosedness}
Fix $\alpha \in (0,1), \ a > 0$ and take 
$\mu \in [1/2, 1]$ so that $4\mu + \alpha \notin \mathbb{Z}$.
For each initial value 
\[
	\rho_0 \in V_{\mu, per} := bc^{4 \mu + \alpha}_{a,per}(\Cr) \cap [\rho > - r],
\]
there exists a unique maximal $a$--periodic solution to \eqref{NASD}
\[
	\rho(\cdot, \rho_0) \in C^1_{1 - \mu} \big(J(\rho_0), bc^{\alpha}_{a,per}(\Cr) \big) \cap 
		C_{1 - \mu} \big(J(\rho_0), bc^{4 + \alpha}_{a,per}(\Cr) \big).
\]
It follows that 
\[
    \mathcal{D} := \bigcup_{\rho_0 \in V_{\mu,per}} J(\rho_0) \times \{ \rho_0 \}
\]
is open in $\R_+ \times V_{\mu,per}$ and the map
$[(t, \rho_0) \mapsto \rho(t, \rho_0)]$ defines an analytic semiflow on $V_{\mu,per}$.
Moreover, if there exists $0 < M < \infty$ so that, for all $t \in J(\rho_0)$,
\begin{enumerate}
        \vspace{2mm}
	\item $\displaystyle \rho(t, \rho_0)(p) \ge 1/M - r \quad \text{for all $p \in \Cr$, and}$
        \vspace{2mm}
	\item $\displaystyle \| \rho(t, \rho_0) \|_{bc^{4 \mu + \alpha}_{a,per}(\Cr)} \le M$,
\end{enumerate}
then it must hold that $t^+(\rho_0) = \infty$ and $\rho(\cdot, \rho_0)$ is a
\textit{global solution}.
\end{prop}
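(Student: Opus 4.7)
The plan is to reduce the periodic statement to the general well--posedness result Proposition~\ref{UnboundedLocalSolutions} by exploiting shift--equivariance. First, fix $\rho_0 \in V_{\mu,per}$. Since $\rho_0$ is continuous, $a$--periodic in $x$ and satisfies $\rho_0 > -r$, compactness of the fundamental domain $[0,a]\times\Tone$ yields some $\varepsilon = \varepsilon(\rho_0) > 0$ with $\rho_0 > \varepsilon - r$ on $\Cr$. Thus $\rho_0$ lies in the admissible set $V_\mu$ of Proposition~\ref{UnboundedLocalSolutions} for this $\varepsilon$, which produces a unique maximal solution $\rho(\cdot,\rho_0)$ in the stated regularity class over $\Cr$. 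To see that this solution remains periodic, I would apply the axial shift $T_a$: it commutes with differentiation, multiplication and inversion, hence with $G$, so $T_a\rho(\cdot,\rho_0)$ is also a solution of \eqref{NASD} with initial datum $T_a\rho_0 = \rho_0$. Uniqueness in Proposition~\ref{UnboundedLocalSolutions} then forces $T_a\rho(t,\rho_0)=\rho(t,\rho_0)$ for every $t\in J(\rho_0)$, i.e.\ the closed subspace $V_{\mu,per}$ is invariant under the semiflow, and the regularity upgrades to the periodic spaces as claimed. Openness of $\mathcal{D}$ in $\R_+\times V_{\mu,per}$ and analyticity of the semiflow are inherited from the general statement restricted to this closed invariant subspace.

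For the global--existence criterion the key new point is that the analogue of hypothesis (i) of Proposition~\ref{UnboundedLocalSolutions} (uniform continuity into $bc^{4\mu+\alpha}$) is \emph{dropped}. The reason is that $bc^{\sigma}_{a,per}(\Cr)$ consists of functions on the compact quotient $(\R/a\mathbb{Z})\times\Tone$, so the embedding $bc^{4+\alpha}_{a,per}(\Cr)\hookrightarrow bc^{4\mu+\alpha}_{a,per}(\Cr)$ is compact by the standard Arzel\`a--Ascoli argument. Under hypotheses (1) and (2), bound (1) keeps the trajectory uniformly bounded away from the boundary $\rho = -r$, while (2) combined with the compact embedding produces a cluster point $\rho^\star \in V_{\mu,per}$ along any sequence $t_n\uparrow t^+(\rho_0)$. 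Restarting Proposition~\ref{UnboundedLocalSolutions} at $\rho^\star$ would then extend the solution past $t^+(\rho_0)$, contradicting maximality unless $t^+(\rho_0)=\infty$. This is precisely the standard continuation argument for quasilinear parabolic equations with continuous maximal regularity, carried out in the periodic--axisymmetric case as Proposition~2.3 of~\cite{LS13}.

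The main (and essentially only) obstacle is bookkeeping: one must verify that Proposition~\ref{UnboundedMaxReg} descends to the periodic setting. This is automatic, however, since for $\rho\in V_{\mu,per}$ the coefficients $b_\beta(\rho)$ of $\mathcal{A}(\rho)$ are themselves $a$--periodic, so $\mathcal{A}(\rho)$ maps $bc^{4+\alpha}_{a,per}(\Cr)$ into $bc^{\alpha}_{a,per}(\Cr)$; the ellipticity bound \eqref{SymbolOfA} and the trace--space characterization used in Proposition~\ref{UnboundedMaxReg} are unaffected by restricting to the closed shift--invariant subspace, so $\mathcal{A}(\rho)$ retains its continuous maximal regularity in the periodic spaces.
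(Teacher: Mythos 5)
Your proposal follows essentially the same route as the paper: apply the general existence result (Proposition~\ref{UnboundedLocalSolutions}), use $T_a$--equivariance of $G$ together with uniqueness to propagate periodicity to the whole trajectory, and invoke compactness of the periodic little--H\"older embedding for the global continuation criterion. The only cosmetic difference is that the paper disposes of the continuation step by citing \cite[Theorem 4.1(d)]{CS01}, whereas you sketch the standard cluster--point--and--restart argument by hand; otherwise the arguments coincide.
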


\begin{proof}
By the assumption $\rho_0 > -r$ and periodicity, it is clear that  $\rho_0$ is uniformly
bounded away from the $x$-axis.
According to Theorem~\ref{UnboundedLocalSolutions}, the surface diffusion flow~\eqref{NASD}
admits a unique solution $\rho(\cdot)=\rho(\cdot,\rho_0)$ on a maximal interval of existence 
$J(\rho_0)$. It, thus, only remains to show that $\rho(t)$ is $a$-periodic for
each $t\in J(\rho_0)$.
Let $\rho_a(t):=T_a\rho(t)$ for $t\in J(\rho_0)$.
Then 
\begin{equation*}
\begin{aligned}
	&\partial_t \rho_a(t)=T_a\partial_t\rho(t)=T_a G(\rho(t))=G(T_a\rho(t)),
			\quad t\in J(\rho_0), \\
	&\rho_a(0)=T_a\rho(0)=T_a\rho_0=\rho_0,
\end{aligned}
\end{equation*}
showing that $\rho(\cdot)$ and $\rho_a(\cdot)$ both are solutions of \eqref{NASD} 
with the same initial value.
By uniqueness, $T_a\rho(t)=\rho(t)$, 
implying that $\rho(t)$ is $a$-periodic for all $t\in J(\rho_0)$.

The global well--posedness result follows from compactness of the embedding 
$bc^{4 + \alpha}_{a, per}(\Cr) \hookrightarrow bc^{\alpha}_{a,per}(\Cr)$
and \cite[Theorem 4.1(d)]{CS01}.
\end{proof}

For $\rho\in bc_{a,per}^\alpha(C_r)$ we define the area of 
$\Gamma(\rho)=\{p+\rho(p)\nu(p): p\in \Cr\}$
and the volume of the solid enclosed by $\Gamma(\rho)$, respectively, in a natural way
by just considering the part of $\Gamma(\rho)$ on an interval of periodicity in the $x$-direction.
More precisely, we set
\begin{equation}\label{restricted-cylinder}
\begin{split}
	&\C_{r,a} =\{(x,y,z)\in\R^3: y^2+z^2=r^2,\; x\in [0,a]\}, \\
	&\Gamma(\rho,a)=\{p+\rho(p)\nu(p): p\in \C_{r,a}\}. 
\end{split}
\end{equation}
Then we have the following result.

\begin{prop}
\label{conservation}
Fix $\alpha \in (0,1), a > 0$ and take 
$\mu \in [1/2, 1]$ so that $4\mu + \alpha \notin \mathbb{Z}$.
For each initial value
\[
	\rho_0 \in V_{\mu, per} := bc^{4 \mu + \alpha}_{a,per}(\Cr) \cap [\rho > - r],
\]
let $\rho=\rho(t,\rho_0)$ be the solution of \eqref{NASD}.
Then the surface diffusion flow 
preserves the volume of the region enclosed by $\Gamma(\rho(t),a)$ 
and bounded by the planes $x=0$ and $x=a$.
Moreover, the flow reduces the surface area of $\Gamma(\rho(t),a)$.
\end{prop}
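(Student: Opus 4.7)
The plan is to adapt the well-known global computations \eqref{volume}--\eqref{area} from the introduction to the periodic setting, where the surface piece $\Gamma(\rho(t),a)$ is no longer closed but has boundary consisting of the two curves sitting in the planes $x=0$ and $x=a$. The key point is that boundary terms arising from integration by parts will cancel by $a$-periodicity of $\rho(t)$, $H_{\Gamma(t)}$, and all associated geometric quantities.

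First I would compute $\tfrac{d}{dt}\mathrm{Vol}(t)$, where $\mathrm{Vol}(t)$ is the volume of the region bounded by $\Gamma(\rho(t),a)$ together with the two flat cross-sections in the planes $x=0$ and $x=a$. Since the flat cross-sections are fixed in space, they contribute nothing to the time derivative, and only the normal motion of $\Gamma(\rho(t),a)$ matters:
\begin{equation*}
	\frac{d}{dt}\mathrm{Vol}(t) = \int_{\Gamma(\rho(t),a)} V(t)\,d\sigma = \int_{\Gamma(\rho(t),a)}\Delta_{\Gamma(t)}H_{\Gamma(t)}\,d\sigma.
\end{equation*}
Applying the divergence theorem on the manifold-with-boundary $\Gamma(\rho(t),a)$, the right-hand side becomes a flux integral $\int_{\partial \Gamma(\rho(t),a)} \mathrm{grad}_{\Gamma(t)}H_{\Gamma(t)} \cdot \eta \, ds$, where $\eta$ denotes the outward conormal along the boundary. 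The boundary $\partial \Gamma(\rho(t),a)$ has two components, one above $x=0$ and one above $x=a$, and by $a$-periodicity the values of $\mathrm{grad}_{\Gamma(t)}H_{\Gamma(t)}$ agree pointwise on these two components; the conormals, on the other hand, point in opposite axial directions, so the two flux contributions cancel and $\mathrm{Vol}(t)$ is conserved.

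Next I would repeat the argument for the surface area. Using Proposition~\ref{PeriodicWellPosedness} (which gives enough regularity to differentiate under the integral) together with the first variation of area formula,
\begin{equation*}
	\frac{d}{dt}A(t) = \int_{\Gamma(\rho(t),a)} V(t)\,H_{\Gamma(t)}\,d\sigma = \int_{\Gamma(\rho(t),a)} (\Delta_{\Gamma(t)}H_{\Gamma(t)})\,H_{\Gamma(t)}\,d\sigma.
\end{equation*}
Green's identity on $\Gamma(\rho(t),a)$ gives
\begin{equation*}
	\frac{d}{dt}A(t) = -\int_{\Gamma(\rho(t),a)} |\mathrm{grad}_{\Gamma(t)}H_{\Gamma(t)}|^2\,d\sigma + \int_{\partial\Gamma(\rho(t),a)} H_{\Gamma(t)}\,\mathrm{grad}_{\Gamma(t)}H_{\Gamma(t)}\cdot\eta\,ds.
\end{equation*}
The boundary integral again vanishes by exactly the same periodicity/opposite-conormal argument, leaving a manifestly non-positive expression for $\tfrac{d}{dt}A(t)$.

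The main obstacle, and the only place where a small amount of care is needed, is the periodicity cancellation of the two boundary terms: one must verify that, in the induced metric on $\Gamma(\rho(t),a)$, the outward conormals along the two boundary curves are genuinely axial (up to sign), which follows because $T_a$ is an isometry of $\Cr$ leaving $\rho(t)$ invariant. Once this is noted, the matching of $H_{\Gamma(t)}$ and $\mathrm{grad}_{\Gamma(t)}H_{\Gamma(t)}$ on the two boundary components is immediate from the fact that $G$ preserves periodicity, so all intrinsic quantities built from $\rho(t)$ are $a$-periodic. Everything else is a direct transcription of \eqref{volume}--\eqref{area}.
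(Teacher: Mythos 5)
Your approach is correct but takes a genuinely different route from the paper. You adapt the intrinsic, coordinate--free computations \eqref{volume}--\eqref{area} to the manifold--with--boundary $\Gamma(\rho(t),a)$, invoking the divergence theorem and Green's identity and then arguing that the two boundary flux integrals cancel by $a$--periodicity. The paper explicitly acknowledges that this is possible (``the assertions follow from \eqref{volume}-\eqref{area}'') but deliberately chooses instead a direct coordinate computation: it writes $\mathrm{Vol}(\rho(t)) = \tfrac12\int (r+\rho)^2\,d\theta\,dx$ and $A(\rho(t)) = \int\sqrt{\g(\rho(t))}\,d\theta\,dx$, differentiates under the integral, substitutes the divergence form of \eqref{EvolOperator}, and lets periodicity kill the boundary terms arising from integration by parts in $x$ and $\theta$. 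The trade--off is clear: your argument is conceptually cleaner and makes the mechanism (cancellation of opposite--conormal boundary fluxes) transparent, whereas the paper's is entirely elementary and self--contained, sidestepping the Riemannian divergence theorem on manifolds with boundary in favor of explicit formulas. One small imprecision in your write--up: the outward conormals along $\{x=0\}$ and $\{x=a\}$ are in general \emph{not} ``genuinely axial'' when $g_{12}=\rho_x\rho_\theta\neq 0$, since orthogonality to $\partial_\theta$ in the induced metric tilts the conormal off the $\partial_x$ direction. What the cancellation actually uses --- and what does hold by periodicity of the metric, the line element, and $\mathrm{grad}_\Gamma H_\Gamma$ --- is that the conormals at the two boundary points lying over $(0,\theta)$ and $(a,\theta)$ are exact negatives of each other as intrinsic vectors; with that corrected, the argument is sound.
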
 

\begin{proof}
Although the assertions follow from \eqref{volume}-\eqref{area}
it will be instructive to give an independent proof that 
only relies on basic computations.
A short moment of reflection shows that volume is given by
\begin{equation}\label{volume-periodic}
{\rm Vol}(\rho(t))=\frac{1}{2}\int_{[0,a]\times [0,2\pi]} (r+\rho(t))^2\, d\theta dx. 
\end{equation}
Using  the short form $\rho=\rho(t,x,\theta)$, 
and keeping in mind that $\rho (t,x,\theta)$
is periodic in $(x,\theta)\in [0,a]\times [0,2\pi]$, 
one immediately obtains from \eqref{EvolOperator}
\begin{equation*}
\frac{d}{dt}Vol(\rho(t)) =\int (r+\rho)\rho_t\, d\theta dx = 0.
\end{equation*}
Next we note that the surface area of $\Gamma(\rho(t),a))$ is given by
\[
{\rm A}(\rho(t))=\int_{[0,a]\times [0,2\pi]} \sqrt{\g(\rho(t))}\, d\theta dx .
\]
It follows from integration by parts (using periodicity) 
and formulas \eqref{CalG}--\eqref{EvolOperator} 
\begin{equation*}
\label{area-periodic}
\begin{aligned}
\frac{d}{dt}A(\rho(t)) 
&=\int\left[\frac{(r+\rho)(1+\rho_x^2)}{\sqrt{\g}}
-\partial_x\left(\frac{(r+\rho)^2 \rho_x}{\sqrt{\g}}\right)
-\partial_\theta\left(\frac{\rho_\theta}{\sqrt{\g}}\right)\right]\rho_t\,d\theta dx \\
&=\int  (r+\rho)\cH \rho_t\, d\theta dx \\
&=-\int \frac{1}{\sqrt{\g}}\left[((r+\rho)^2+\rho_\theta^2)(\cH_x)^2
	- 2\rho_x\rho_\theta (\cH_x \cH_{\theta} + 
	(1+\rho_x^2)(\cH_{\theta})^2\right]d\theta dx \\
&\le -\int \frac{1}{\sqrt{\g}}\left[(r+\rho)^2(\cH_x)^2 
	+ (\cH_{\theta})^2\right]\,d\theta dx. 
\end{aligned}
\end{equation*}
Hence $A(\rho(t))$ is non-increasing and acts as a Lyapunov functional for
the evolution in the $a$--periodic setting. 
We also conclude that $a$--periodic equilibria of \eqref{NASD} 
correspond exactly to ($a$--periodic) surfaces of constant mean curvature.
\end{proof}

Before moving on to establish dynamic properties of solutions, 
we state the following characterization for axial
periodic height functions defined on $\Cr$. This characterization allows
explicit access to calculations involving the linearization and spectrum
of $DG(0)$ in the periodic setting.

\begin{prop}\label{FourierRep}
Given $\rho \in V_{\mu} := bc^{4 \mu + \alpha}_{a,per}(\Cr),$
it follows that $\rho$ satisfies the Fourier series representation
\[
    \rho(x, \theta) = \sum_{m,n \in \mathbb{Z}} \hat{\rho}(m,n) e^{2 \pi i m x / a} e^{i n \theta},
        \qquad (x, \theta) \in \R \times \Tone,
\]
where 
\[
    \hat{\rho}(m,n) := \frac{1}{2 \pi a}\int_{[0,a] \times \Tone} \rho(x, \theta) e^{- 2 \pi i m x / a} e^{- i n \theta}\,d\theta dx.
\]
\end{prop}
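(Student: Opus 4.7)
The plan is to reinterpret $\rho$ as a function on a compact 2-torus, where classical Fourier theory applies directly. Since elements of $V_{\mu,per}$ are both $a$-periodic in the axial variable $x$ (by the definition of $bc^{4\mu+\alpha}_{a,per}(\Cr)$) and $2\pi$-periodic in the angular variable $\theta$ (by the definition of $\Tone$), $\rho$ descends to a well-defined function on the compact product torus $\mathbb{T}^2_a := (\R/a\mathbb{Z}) \times \Tone$. Note that the standing assumption $\mu \geq 1/2$ forces $4\mu + \alpha \geq 2 + \alpha$, so $\rho$ is at least $bc^{2+\alpha}$-regular on $\mathbb{T}^2_a$; this is precisely the regularity I will use to guarantee convergence of the Fourier series.

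Once the problem is recast on $\mathbb{T}^2_a$, the coefficient formula becomes routine. The exponentials $e_{m,n}(x,\theta) := e^{2\pi i m x/a} e^{in\theta}$, indexed by $(m,n) \in \mathbb{Z}^2$, form a complete orthogonal basis of $L^2(\mathbb{T}^2_a)$, with $\|e_{m,n}\|_{L^2}^2 = 2\pi a$. Computing the $L^2$-inner product $\langle \rho, e_{m,n}\rangle_{L^2(\mathbb{T}^2_a)}$ and dividing by $2\pi a$ immediately yields the asserted expression for $\hat{\rho}(m,n)$. In particular, the partial sums $\sum_{|m|,|n|\le N} \hat{\rho}(m,n) e_{m,n}$ are precisely the orthogonal projections of $\rho$ onto the finite-dimensional subspaces spanned by finitely many characters, and so they converge to $\rho$ in the $L^2$-norm by completeness.

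The only step requiring more than routine Fourier theory is upgrading $L^2$-convergence to pointwise (in fact uniform) convergence. The main obstacle here is obtaining sufficient decay on $\hat{\rho}(m,n)$ in two dimensions. I would combine iterated integration by parts (with all boundary terms vanishing by double periodicity) with the classical translation trick that extracts an additional factor from H\"older regularity of the top-order derivatives. This produces the estimate
\[
    |\hat{\rho}(m,n)| \leq C\,(1 + |m| + |n|)^{-(2+\alpha)}, \qquad (m,n) \in \mathbb{Z}^2,
\]
with $C$ depending only on $\|\rho\|_{bc^{2+\alpha}(\mathbb{T}^2_a)}$. Since the number of lattice points $(m,n)$ with $|m|+|n| = k$ grows linearly in $k$, the corresponding majorant is $\sum_k k \cdot k^{-(2+\alpha)} = \sum_k k^{-(1+\alpha)} < \infty$. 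Hence the Fourier series converges absolutely and uniformly on $\mathbb{T}^2_a$; its continuous limit then coincides with $\rho$ at every point by uniqueness of $L^2$-representations, which yields the stated pointwise representation on $\R \times \Tone$.
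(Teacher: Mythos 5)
Your argument is correct and follows essentially the same route as the paper: both reinterpret $\rho$ as a doubly periodic function on a compact $2$-torus and then invoke Fourier analysis for periodic functions. The paper simply cites classical multi-dimensional Fourier theory (Schmeisser--Triebel \cite{ST87}) for the conclusion, whereas you have written out the underlying argument yourself --- the $L^2$-orthogonality computation giving the coefficient formula, the integration-by-parts plus H\"older-translation trick producing the decay $|\hat{\rho}(m,n)| \lesssim (1+|m|+|n|)^{-(2+\alpha)}$, and the lattice-point count showing absolute and uniform convergence; these details are correct, and the regularity $4\mu+\alpha \ge 2+\alpha$ you use is indeed available since $\mu \ge 1/2$ throughout.
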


\begin{proof}
This follows from the realization of $\rho$ as a function over $\R \times \Tone$ ---
as discussed in Section~\ref{Sec:SDFormula} --- and classic Fourier analysis results
for periodic functions in higher dimensions (c.f. \cite{ST87}).
\end{proof}

\section{Nonlinear Stability and Instability of Cylinders Under Periodic Perturbations}

For the remainder of the paper we assume $a = 2 \pi$, 
and consider the behavior 
of the evolution equation \eqref{NASD} acting on $2 \pi$--axial--periodic
admissible height functions defined over $\Cr$
(i.e. $\rho_0 \in bc^{2 + \alpha}_{2\pi, per}(\Cr)$ and $\rho_0 > -r$).
Specifically, we are interested in the nonlinear dynamics of \eqref{NASD}
in a neighborhood of $\rho \equiv 0$ (i.e. near the cylinder $\Cr$ itself).
Thus, we begin in the next subsection by determining the spectral properties 
of the linearized evolution operator $DG(0)$.

\subsection{Linearization of $G$}

Utilizing the expression \eqref{EvolOperator}, one readily computes
the Fr\'echet derivative of $G$, which simplifies considerably
upon evaluation at the constant function $\rho \equiv 0$ to the operator
$DG(0) \in \mathcal{L}
\big( bc^{4+\alpha}_{2\pi, per}(\Cr), bc^{\alpha}_{2\pi, per}(\Cr) \big)$
given by
\begin{equation}\label{DGatRS}
	DG(0) h = - \left\{ \partial_x^2 + r^{-2} \partial_\theta^2 \right\} 
                    \left\{ \partial_x^2 + r^{-2} \partial_\theta^2 + r^{-2} \right\}h, 
                    \qquad h \in bc^{4 + \alpha}_{2\pi, per}(\Cr).
\end{equation}
This can be seen as follows, for instance.
Since we already know that $G$ is differentiable, 
the Fr\'echet derivative $DG(0)$ can be computed as
\begin{equation*}
\begin{aligned}
	DG(0)h = \left.\frac{d}{d\varepsilon}\right|_{\varepsilon=0} G(\varepsilon h)
     =\left[\left.\frac{d}{d\varepsilon}\right|_{\varepsilon=0}
			\Delta_{\Gamma(\varepsilon h)}\right]\cH(0)
      +\Delta_{\Gamma(0)}\cH^\prime(0)h.\quad 
\end{aligned}
\end{equation*}
Here one notes that $\Delta_{\Gamma(\epsilon h)} \cH(0) \equiv 0$  
(since $\cH(0) = \cH_{\Cr} \equiv 1/r$ is constant) while  
\[
\Delta_{\Gamma(0)} = \Delta_{\Cr} = \partial_x^2+r^{-2}\partial_\theta^2,
\] 
and $\cH^\prime (0) = -( \Delta_{\Cr} + r^{-2} )$ follow from 
\cite[formula (31)]{PS13} (noting that, by orientation convention, 
mean curvature has the opposite sign in \cite{PS13}).

\begin{lemma} 
\label{spectrum}
The spectrum of $DG(0)$ consists entirely of eigenvalues and is given by
\begin{equation}\label{SpectrumDG}
	\sigma(DG(0)) = \left\{ - \left(m^2 + r^{-2} n^2 \right) 
                  \left(m^2 + r^{-2} n^2 - r^{-2} \right): 
		  m, n \in \mathbb{Z} \right\}.
\end{equation}
Moreover, $0$ is a semi-simple eigenvalue of multiplicity 3 and
\[
    N(DG(0)) = {\rm span}\{1, \cos(\theta), \sin (\theta) \}.
\]
\end{lemma}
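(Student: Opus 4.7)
The plan is to exploit the fact that $DG(0)$, as expressed in \eqref{DGatRS}, is a \emph{constant-coefficient} linear differential operator and that the $2\pi$-periodic setting effectively replaces $\Cr$ by a compact two-torus. Hence the Fourier basis $\{e_{m,n}(x,\theta):=e^{imx}e^{in\theta}: m,n\in\mathbb{Z}\}$ from Proposition \ref{FourierRep} simultaneously diagonalizes $DG(0)$, and the whole spectral picture should reduce to a mode-by-mode calculation.

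First I would compute $(\partial_x^2+r^{-2}\partial_\theta^2)e_{m,n}=-(m^2+r^{-2}n^2)e_{m,n}$ and substitute into \eqref{DGatRS} to obtain
\[
DG(0)e_{m,n}=-(m^2+r^{-2}n^2)(m^2+r^{-2}n^2-r^{-2})\,e_{m,n},
\]
producing every eigenvalue $\lambda_{m,n}$ listed in \eqref{SpectrumDG}. For the kernel, $\lambda_{m,n}=0$ forces either $m^2+r^{-2}n^2=0$ (so $m=n=0$, giving the constant $1$) or $r^2m^2+n^2=1$ (so $m=0,\,n=\pm1$, giving $e^{\pm i\theta}$). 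Taking real and imaginary parts yields $N(DG(0))=\mathrm{span}\{1,\cos\theta,\sin\theta\}$, of dimension exactly three.

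To confirm that \eqref{SpectrumDG} \emph{exhausts} $\sigma(DG(0))$, I would argue as follows. The $2\pi$-periodic setting realizes the base manifold as a compact torus, so the embedding $bc^{4+\alpha}_{2\pi,per}(\Cr)\hookrightarrow bc^{\alpha}_{2\pi,per}(\Cr)$ is compact by Arzel\`a--Ascoli. Proposition \ref{UnboundedMaxReg} ensures $\mathcal{A}(0)$, and thus $DG(0)$ (which differs from $-\mathcal{A}(0)$ by a lower-order perturbation), has non-empty resolvent set, so $(\lambda-DG(0))^{-1}$ is compact on its resolvent set and $\sigma(DG(0))$ consists entirely of eigenvalues of finite multiplicity. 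An eigenfunction $h$ for some $\lambda\in\sigma(DG(0))$ is automatically smooth by elliptic regularity for the constant-coefficient operator, hence admits a classical Fourier expansion $\sum \hat h(m,n)e_{m,n}$; the mode-by-mode identity $(\lambda-\lambda_{m,n})\hat h(m,n)=0$ then forces $\lambda=\lambda_{m,n}$ for some $(m,n)\in\mathbb{Z}^2$. Semi-simplicity of $0$ follows from the same diagonalization: if $DG(0)^{2}h=0$, then $\lambda_{m,n}^{2}\hat h(m,n)=0$ mode-wise, so $\hat h(m,n)=0$ whenever $\lambda_{m,n}\ne 0$, which means $h\in N(DG(0))$; iterating gives $N(DG(0)^{k})=N(DG(0))$ for all $k\ge 1$.

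The main (mild) technical obstacle is justifying Fourier manipulations at the little-H\"older regularity level, since general continuous periodic functions need not have uniformly convergent Fourier series. This is circumvented precisely because the relevant eigenfunctions and generalized eigenfunctions of the smooth constant-coefficient operator $DG(0)$ are themselves $C^\infty$, so the spectral bookkeeping reduces to standard Fourier analysis on $\mathbb{T}^{2}$.
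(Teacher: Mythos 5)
Your argument is correct and follows the paper's own approach: compactness of the embedding $bc^{4+\alpha}_{2\pi,per}(\Cr)\hookrightarrow bc^{\alpha}_{2\pi,per}(\Cr)$ to reduce the spectrum to eigenvalues, Fourier diagonalization of the constant-coefficient multiplier $DG(0)$ via Proposition~\ref{FourierRep}, and the mode-by-mode vanishing of $\hat h(m,n)$ for $\lambda_{m,n}\neq 0$ to read off both the kernel and semi-simplicity. One small remark shared by both arguments: the claims that $\dim N(DG(0))=3$ and that $N(DG(0))$ is spanned by $\{1,\cos\theta,\sin\theta\}$ implicitly use $r>1$ (otherwise the equation $r^2m^2+n^2=1$ can have additional integer solutions, e.g.\ $(m,n)=(\pm1,0)$ at $r=1$), so you should flag that hypothesis explicitly when you invoke it.
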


\begin{proof}
Due to compactness of the embedding   
$bc^{4+\alpha}_{2\pi, per}(\Cr)$ into $bc^{\alpha}_{2\pi, per}(\Cr)$ 
we know that the spectrum of $DG(0)$ consists only of eigenvalues.
Taking advantage of Proposition~\ref{FourierRep}, we conclude that
\begin{equation*}
	\sigma(DG(0)) = \left\{ - \left(m^2 + r^{-2} n^2 \right) 
                  \left(m^2 + r^{-2} n^2 - r^{-2} \right): 
		  m, n \in \mathbb{Z} \right\}.
\end{equation*}
Recalling the expression \eqref{DGatRS} for the linearized evolution
operator, $DG(0)$ is realized as a Fourier multiplier 
acting on $h \in bc^{4 + \alpha}_{2\pi, per}(\Cr)$ as
\[
    DG(0)h = \sum_{m,n \in \mathbb{Z}} 
			\hat{h}(m,n)[(m^2 + r^{-2} n^2)(m^2 + r^{-2} n^2 - r^{-2})] e^{i m x} e^{i n \theta}. 
\]
Locating the kernel of the linearization, if $DG(0)h = 0$,
then the fact that $r > 1$ necessarily implies that $\hat{h}(m,n) = 0$
whenever $m \ne 0$, and for $|n| > 1$ when $m = 0$.
Thus, the kernel of $DG(0)$ coincides with the 3--dimensional 
subspace spanned by the first order cylindrical harmonics 
$\{1,\cos(\theta),\sin(\theta)\}$.

Similarly, if $DG(0) h \in N(DG(0))$, it follows that $\hat{h}(m,n) = 0$
for all $m \ne 0$ and for $|n| > 1$ when $m = 0$. 
Hence, $N(DG(0)^2) = N(DG(0))$ and $0$ is thus a semi--simple
eigenvalue of $DG(0)$ with both geometric and algebraic multiplicity 
$3$ and eigenspace $N$.
\end{proof}

Note that $DG(0) \subset (-\infty, 0]$ when $r\ge 1$, 
which we will exploit in order to obtain stability.  
However, accounting for the presence of $0$ in the 
spectrum $DG(0)$ --- and acknowledging the fact that 
the family of cylinders $\Cr$ are obviously not isolated
equilibria ---
we turn to a center-manifold analysis for the stability analysis.

\subsection{Local Family of Cylinders}

Before we can apply the main result of \cite{PSZ09}, 
we must address the local nature of the family of 
equilibria surrounding the cylinder.
We show in this section that the family of 
equilibria surrounding $\rho \equiv 0$ is 
a smooth 3--dimensional manifold in $bc^{4 + \alpha}_{2\pi, per}(\Cr)$.
Specifically, we shall show that the local family of equilibria
coincides precisely with the manifold of two--dimensional cylinders 
near $\Cr$, for which we provide an explicit parametrization.

Denote by $\mathcal{M}_{cyl} \subset bc^{4 + \alpha}(\Cr)$
the family of height functions $\brho = \brho(\bar{y},\bar{z},\bar{r})$ 
such that 
\begin{equation*}
\Gamma(\brho)=C(\bar y,\bar z,\bar r):=\{(x,y,z)\in\R^3 : (y-\bar y)^2+(z-\bar z)^2={\bar r}^2,\; x\in \R\}
\end{equation*}
i.e. $\Gamma(\brho)$ is an infinite cylinder with axis of symmetry
$(x, \bar{y}, \bar{z})$ and $\bar{r}>0$,
(in contrast to $\Cr$ which is centered about the axis $(x, 0, 0)$ with
radius $r$). It follows that $\brho = \brho(x,\theta)$ must be independent
of $x$, and, as depicted in Figure 1, it is clear that 
$\brho \in \mathcal{M}_{cyl}$ must satisfy the equation 
\begin{equation*}
((r + \brho) \cos(\theta)-\bar y)^2 + 
	((r+\brho) \sin (\theta)-\bar z)^2=\bar r^2.
\end{equation*}
\vspace{-2em}
\begin{figure}[h]
\centering
\includegraphics[height=6cm,width=9cm,clip=true,trim=1.3in 4.5in 1in 2.75in]{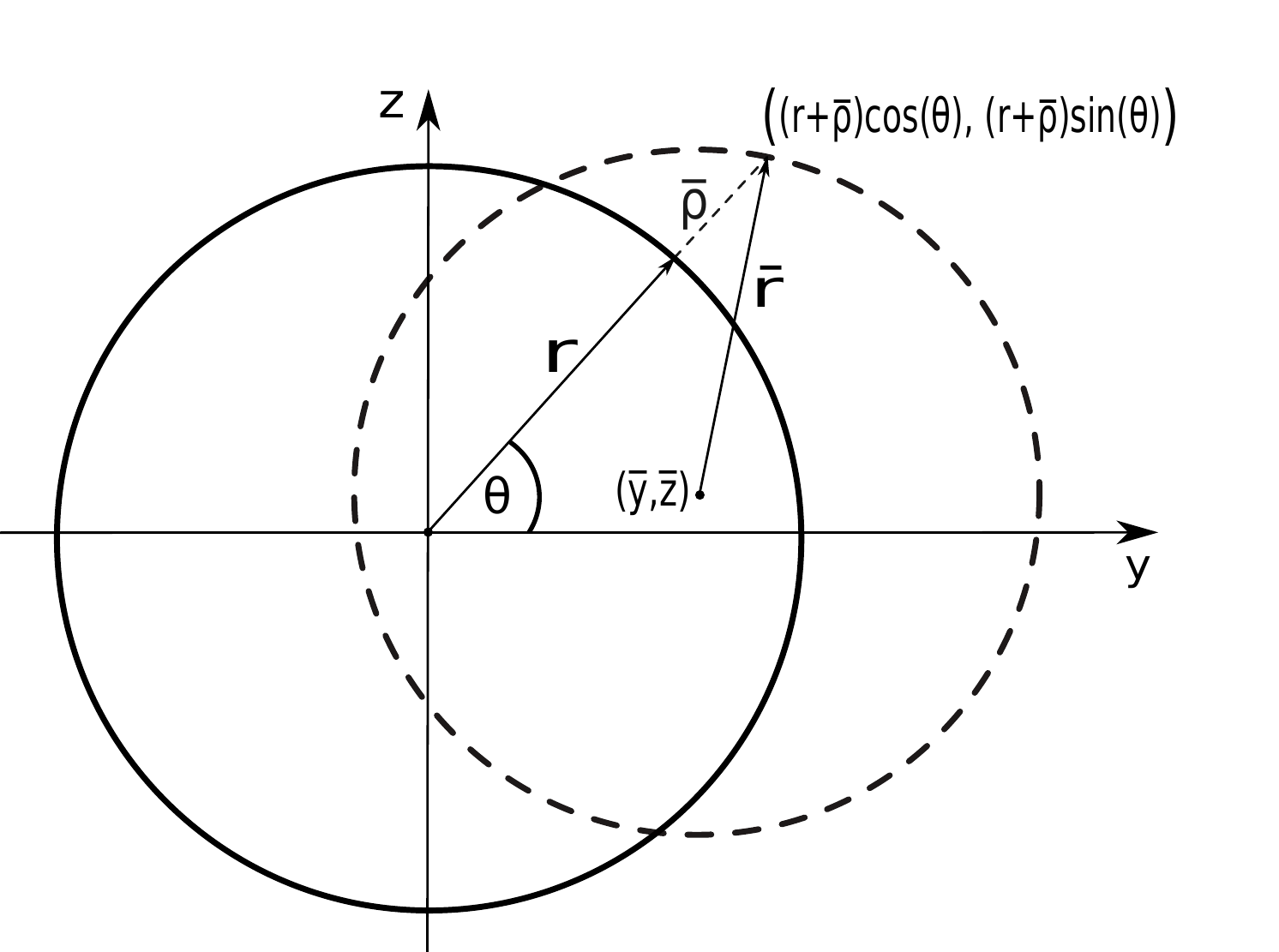}
\caption{Cross section of $C(\bar{y},\bar{z},\bar{r})$ defined over $\Cr$ via
	the height function $\brho = \brho(x,\theta)$.}
\end{figure}

\noindent Solving this quadratic equation for $(r + \brho)$ readily yields
the formula
\[
    \brho(x,\theta) = \cos(\theta)\bar{y} + \sin(\theta)\bar{z}
        + \sqrt{\bar{r}^2 - (\sin(\theta)\bar{y} - \cos(\theta)\bar{z})^2} - r.
\] 
The mapping $[(\bar{y},\bar{z},\bar{r}) \mapsto \brho]$ is a 
local diffeomorphism from some neighborhood
$\mathcal{O} \subset \R^3$ of $(0,0,r)$ into $bc^{4 + \alpha}_{2 \pi, per}(\Cr)$,
so that $\mathcal{M}_{cyl}$ is a 3--dimensional Banach manifold of 
equilibria containing $\rho \equiv 0$. 

Next, we wish show that $\mathcal{M}_{cyl}$ contains {\bf all} equilibria
of \eqref{NASD} in a neighborhood of $\rho \equiv 0$.
We accomplish this by showing that $\mathcal{M}_{cyl}$ coincides
locally with a stable center manifold  for \eqref{NASD}.

\begin{thm}\label{EquilMfd}
Given $r > 1$, \eqref{NASD} admits a unique, 3--dimensional stable 
center manifold in a neighborhood of $\rho \equiv 0$.
Moreover, this center manifold must coincide with $\mathcal{M}_{cyl}$,
which thus contains all equilibria near $\rho \equiv 0$.
\end{thm}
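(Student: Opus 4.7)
The plan is to verify the hypotheses of the center manifold theorem of Pr\"uss--Simonett--Zacher \cite{PSZ09}, apply it to obtain a 3--dimensional locally invariant center manifold $\mathcal{M}_c$ for \eqref{NASD} at $\rho\equiv 0$, and then identify $\mathcal{M}_c$ with $\mathcal{M}_{cyl}$ by dimension and tangent space comparison. The non--trivial content lies less in spectral bookkeeping and more in the correct functional--analytic setup and in the uniqueness argument that pins down $\mathcal{M}_c$ as $\mathcal{M}_{cyl}$.

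First, I would establish the spectral decomposition required for center manifold reduction. From Lemma~\ref{spectrum} the spectrum of $DG(0)$ on $bc^{4+\alpha}_{2\pi,per}(\Cr)$ is
\[
\sigma(DG(0))=\left\{-(m^2+r^{-2}n^2)(m^2+r^{-2}n^2-r^{-2}): m,n\in\mathbb Z\right\}.
\]
For $r>1$, the factor $m^2+r^{-2}n^2-r^{-2}$ vanishes only for $(m,n)=(0,\pm 1)$ (which pairs with a vanishing first factor only when $m=n=0$), so the center spectrum is exactly $\sigma_c=\{0\}$ with generalized eigenspace $N=\mathrm{span}\{1,\cos\theta,\sin\theta\}$ (semisimple by Lemma~\ref{spectrum}); all remaining eigenvalues are strictly negative and bounded away from $0$ because $|m^2+r^{-2}n^2-r^{-2}|\ge \min(1-r^{-2},r^{-2})>0$ whenever $(m,n)\ne (0,0),(0,\pm1)$. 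This gives the required spectral gap.

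Next, with Proposition~\ref{UnboundedMaxReg} supplying continuous maximal regularity for $\mathcal A(\rho)$ on $(bc^{4+\alpha}_{2\pi,per}(\Cr),bc^\alpha_{2\pi,per}(\Cr))$ and the quasilinear decomposition $G(\rho)=-\mathcal A(\rho)\rho+F(\rho)$ with $(\mathcal A,F)$ real analytic, the hypotheses of the center manifold theorem of \cite{PSZ09} are met. This yields a locally invariant, locally unique $C^k$ (indeed analytic) stable center manifold $\mathcal M_c$ in a neighborhood $U$ of $0$, of dimension equal to $\dim N=3$, tangent to $N$ at the origin, exponentially attracting complementary stable directions.

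Then I would show $\mathcal{M}_{cyl}$ has the same tangent space as $\mathcal M_c$ at $\rho\equiv 0$. Differentiating the explicit formula
\[
\brho(x,\theta;\bar y,\bar z,\bar r)=\cos(\theta)\bar y+\sin(\theta)\bar z+\sqrt{\bar r^2-(\sin(\theta)\bar y-\cos(\theta)\bar z)^2}-r
\]
at $(\bar y,\bar z,\bar r)=(0,0,r)$ yields $\partial_{\bar y}\brho=\cos\theta$, $\partial_{\bar z}\brho=\sin\theta$, and $\partial_{\bar r}\brho=1$. Hence $T_0\mathcal{M}_{cyl}=N$, so $\mathcal{M}_{cyl}$ is a 3--dimensional analytic Banach submanifold tangent to the center subspace at the origin.

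Finally, I would identify the two manifolds. Every element of $\mathcal{M}_{cyl}$ is an equilibrium of \eqref{NASD}, hence a globally defined bounded solution lying in $U$ for $\mathcal{M}_{cyl}$ small enough; by the invariance/completeness property of stable center manifolds from \cite{PSZ09} (which attract all small bounded orbits and, in particular, contain every equilibrium near $0$), we conclude $\mathcal{M}_{cyl}\subset \mathcal M_c$ locally. Since both are $C^1$ submanifolds of the same dimension $3$ sharing the tangent space $N$ at $0$, the inverse function theorem forces $\mathcal{M}_{cyl}=\mathcal M_c$ in a neighborhood of $0$. In particular, $\mathcal M_c$ consists entirely of equilibria, and so every equilibrium of \eqref{NASD} near $\rho\equiv 0$ belongs to $\mathcal{M}_{cyl}$.

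The main obstacle is the last step: rather than computing, one must carefully invoke the right ``all equilibria near $0$ lie on the center manifold'' property from \cite{PSZ09} and combine it with tangent--space matching to upgrade the inclusion $\mathcal{M}_{cyl}\subset \mathcal M_c$ to equality. The spectral computation and tangent computation are routine; what requires care is checking that the functional--analytic setup (continuous maximal regularity on $bc^\alpha_{2\pi,per}(\Cr)$, semisimplicity of the $0$ eigenvalue, compactness/gap on the periodic slice) exactly matches the framework of \cite{PSZ09}.
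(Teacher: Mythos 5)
Your proposal follows the same architecture as the paper's proof: spectral splitting using Lemma~\ref{spectrum}, invocation of a center manifold theorem to produce a $3$--dimensional invariant manifold $\mathcal{M}^c$ tangent to $N$, the observation that this manifold must contain all equilibria near $0$ (hence $\mathcal{M}_{cyl}\subset\mathcal{M}^c$), and an inverse function theorem argument to upgrade the inclusion to equality. The explicit differentiation giving $T_0\mathcal{M}_{cyl}=\mathrm{span}\{1,\cos\theta,\sin\theta\}$ is a nice sanity check, though it is actually superfluous: once $\mathcal{M}_{cyl}\subset\mathcal{M}^c$ and both are $3$--dimensional submanifolds through $0$, the tangent spaces coincide automatically, which is all the IFT step needs.

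There is, however, one concrete problem: you attribute both the existence of the stable center manifold and the ``all small global solutions (hence all nearby equilibria) lie on it'' property to Pr\"uss--Simonett--Zacher \cite{PSZ09}. That reference does \emph{not} construct center manifolds; its generalized principle of linearized stability is proved by a direct normal--form argument around the equilibrium manifold and is exactly what the paper invokes later, in the proof of Theorem~\ref{Stabil/Instabil}, for normal stability. The center manifold theorem you need here — with local invariance, tangency to $N$, and the crucial property that the stable center manifold captures every small global (in particular, stationary) solution — is Simonett's \cite[Theorem 4.1]{Sim95} (together with the accompanying attractivity results in that paper), and that is what the paper cites. Because your proof of $\mathcal{M}_{cyl}\subset\mathcal{M}^c$ rests entirely on this ``completeness'' property, the misattribution is not merely bibliographic: as written, the key step appeals to a theorem that is not in the cited source. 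Replacing \cite{PSZ09} by \cite{Sim95} at the relevant points repairs the argument and aligns it with the paper's.
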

\begin{proof}
For 
$N = {\rm span}\{1, \cos(\theta), \sin (\theta) \}$
let
\ $\pi^c : bc^{\sigma}(\Cr) \rightarrow N$ be the orthogonal projection, given by
\[
		\pi^c h := 
			(h | 1) 1 + (h |\cos (\theta)) \cos(\theta) + (h |\sin(\theta)) \sin(\theta),
\]
where the inner product $(\cdot | \cdot)$ is defined for
$2 \pi$--periodic, $L_2$ integrable functions on $\Cr$.
In particular, 
\[
		(h | g) := \int_{\Tone^2} h(x,\theta) g(x,\theta) dx d\theta
\]
compares functions only over one interval of periodicity along the 
x--axis. The space of periodic $L_2$ functions over $\Cr$ clearly contains
$bc^{\sigma}_{2\pi,per}(\Cr)$ as a subspace, for any $\sigma > 0$.
Defining $\pi^s := id - \pi^c$ and $S := \pi^s(bc^{4 + \alpha}_{2\pi, per}(\Cr))$
we get the direct topological decomposition 
$bc^{4 + \alpha}_{2\pi, per}(\Cr)=N\oplus S$
which reduces the operator $DG(0)$,
as follows from the fact that 
$\pi^c$ commutes with $DG(0)$.
We conclude that $\pi^c$ coincides with the spectral projection of $DG(0)$
corresponding to the spectral set~$\{0\}$.
The existence of a (local) center manifold now
follows from \cite[Theorem 4.1]{Sim95}.
In particular, there exists a neighborhood $\mathcal{U}$
of $0 \in N$ and a mapping
\[
		\psi \in C^k(\mathcal{U}, bc^{4 + \alpha}_{2\pi, per}(\Cr)), 
				\qquad \text{for any } k\in \mathbb{N} \setminus \{0\},
\]
such that $\psi(0) = 0$, $\partial \psi (0) = 0$ and the graph
\[
		\mathcal{M}^c := 
			{\rm graph}(\psi) \subset N \oplus S = bc^{4 + \alpha}_{2 \pi, per}(\Cr)
\]
is a locally invariant manifold for the semiflow generated by
\eqref{NASD}, containing all global small solutions
(and, in particular, all equilibria close to $\rho = 0$). 
Moreover, it follows that the tangent space $T_0(\mathcal{M}^c)$
coincides with the eigenspace $N$, so $\mathcal{M}^c$ is a 
3--dimensional manifold containing $\rho=0$.

As $\mathcal M^c$ contains all equilibria that are close to $0$,
we know that $\mathcal M_{cyl}\subset \mathcal M^c$.
Since $\mathcal M_{cyl}$ and $\mathcal M^c$ both are 
(smooth) manifolds of the same dimension, 
a standard inverse function theorem argument 
yields $\mathcal{M}_{cyl} = \mathcal{M}^c$
in a sufficiently small neighborhood of $0$.
But this implies that every equilibrium close to 
$\rho=0$ lies on $\mathcal{M}_{cyl}$,
and thus is a cylinder.
\end{proof}

\subsection{Stability}

We are now prepared to prove the main result of this section 
regarding stability and instability of the family of cylinders $\Cr$ 
under sufficiently small admissible perturbations exhibiting
$2 \pi$--axial--periodicity.

\begin{thm}[stability and instability of cylinders]
\label{Stabil/Instabil}
Let $\alpha \in (0,1)$ and take $\mu \in [1/2, 1]$ so that
$4 \mu + \alpha \notin \mathbb{Z}$. 
\begin{enumerate}
\item (Exponential stability)
Let $r>1$ and $\omega\in (0, 1-r^{-2})$ be fixed.
Then there exist positive constants $\delta=\delta(\omega)$ and $M=M(\omega)$ such that, given an admissible periodic
perturbation
\[
	\rho_0 \in V_{\mu, per} := bc^{4 \mu + \alpha}_{2\pi, per}(\Cr) \cap [\rho > - r],
\]
with $\| \rho_0 \|_{bc^{4 \mu + \alpha}} < \delta$, the solution
$\rho(\cdot, \rho_0)$ exists globally in time, and there exists
a $\brho \in \mathcal{M}_{cyl}$ such that
\[
	\| \rho(t, \rho_0) - \brho \|_{bc^{4 \mu + \alpha}} 
		\le Me^{- \omega t} \|\rho_0\|_{bc^{4 \mu + \alpha}}.	
\]
Moreover, the radius $\bar{r}$ of the limiting cylinder $\brho$ is 
uniquely determined by preservation of periodic volume;
in particular, $\bar{r}$ is determined by the equality
$Vol(\rho_0) = Vol(\brho)$.

\item (instability)
Let $0<r<1$, then $\rho = 0$ is unstable in the topology of
$bc^{4\mu + \alpha}_{2\pi,per}(\Cr)$. 
\end{enumerate}
\end{thm}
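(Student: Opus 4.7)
The plan is to invoke the generalized principle of linearized stability (the normal stability theorem) and its companion instability result of Pr\"uss--Simonett--Zacher \cite{PSZ09}, applied to \eqref{NASD} in the pair of spaces $(E_0, E_1) = (bc^\alpha_{2\pi,per}(\Cr), bc^{4+\alpha}_{2\pi,per}(\Cr))$, for which the quasilinear structure $G(\rho) = -\mathcal{A}(\rho)\rho + F(\rho)$ and continuous maximal regularity have already been established in Proposition~\ref{UnboundedMaxReg}.

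For part (a), three hypotheses of the normal stability theorem must be verified. First, $\rho \equiv 0$ lies on the $C^k$--manifold $\mathcal{M}_{cyl}$ of equilibria (Theorem~\ref{EquilMfd}), and the tangent space $T_0 \mathcal{M}_{cyl}$ must match $N(DG(0))$. Differentiating the explicit formula for $\brho(\bar y, \bar z, \bar r)$ at $(0, 0, r)$ yields $\partial_{\bar y}\brho|_0 = \cos\theta$, $\partial_{\bar z}\brho|_0 = \sin\theta$, $\partial_{\bar r}\brho|_0 = 1$, so $T_0 \mathcal{M}_{cyl} = \mathrm{span}\{1, \cos\theta, \sin\theta\} = N(DG(0))$ by Lemma~\ref{spectrum}. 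Second, $0$ is a semi-simple eigenvalue by Lemma~\ref{spectrum}. Third, direct inspection of \eqref{SpectrumDG} for $r > 1$ shows that the modes yielding the eigenvalue $0$ are exactly $(m, n) \in \{(0, 0), (0, \pm 1)\}$, while every other mode yields a strictly negative value; hence $\sigma(DG(0)) \setminus \{0\}$ sits in the open left half-plane, bounded away from the imaginary axis, which yields the required spectral gap and (after possibly enlarging $M$) allows any prescribed decay rate $\omega \in (0, 1 - r^{-2})$. The normal stability theorem then delivers a global solution $\rho(\cdot, \rho_0)$ for $\|\rho_0\|_{bc^{4\mu+\alpha}} < \delta$ which converges exponentially at rate $\omega$ to some $\brho \in \mathcal{M}_{cyl}$.

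The radius $\bar r$ of the limit is identified via Proposition~\ref{conservation}: the periodic volume is preserved along the flow, so $\mathrm{Vol}(\rho_0) = \mathrm{Vol}(\brho)$. The limit cylinder $C(\bar y, \bar z, \bar r)$ restricted to $x \in [0, 2\pi]$ encloses the solid volume $2\pi^2 \bar r^2$, which is independent of $(\bar y, \bar z)$; thus $\bar r = \sqrt{\mathrm{Vol}(\rho_0)/(2\pi^2)}$ is uniquely determined, while $(\bar y, \bar z)$ are controlled by the remaining two components of the center manifold projection.

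For part (b), substituting $(m, n) = (\pm 1, 0)$ in \eqref{SpectrumDG} produces the eigenvalue $r^{-2} - 1 > 0$ whenever $0 < r < 1$, and it is plainly isolated and separated from the imaginary axis. The normal instability counterpart of the PSZ theorem then yields instability of $\rho \equiv 0$ in the $bc^{4\mu+\alpha}_{2\pi,per}(\Cr)$-topology. The main technical subtlety throughout is ensuring that the PSZ framework, classically formulated in the $L_p$-maximal-regularity setting, transfers to the continuous-maximal-regularity setting used here; this is accomplished by the $C_{1-\mu}$-variant of these theorems (already exploited in the axisymmetric case \cite{LS13}), whose hypotheses are compatible with the real analyticity of $(\mathcal{A}, F)$ from Proposition~\ref{UnboundedMaxReg} and the compact embedding $bc^{4+\alpha}_{2\pi,per}(\Cr) \hookrightarrow bc^\alpha_{2\pi,per}(\Cr)$ afforded by periodicity.
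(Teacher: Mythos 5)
Your approach for part (a) tracks the paper's own proof: verify the normal stability hypotheses (the $C^1$-manifold $\mathcal M_{cyl}$, the tangent space identification $T_0\mathcal M_{cyl} = N(DG(0))$, semi-simplicity of $0$, and the spectral gap) and invoke Pr\"uss--Simonett--Zacher. Your explicit computation of $\partial_{\bar y}\brho|_0 = \cos\theta$, etc.\ makes the tangent space identification more transparent than the paper does, and the formula $\bar r = \sqrt{{\rm Vol}(\rho_0)/(2\pi^2)}$ is a nice concrete form of the volume-preservation argument. All of this is correct.

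Part (b), however, takes a genuinely different route and that route has a gap. The paper disposes of instability by citing \cite[Theorem 5.1]{LS13}: nonlinear instability was already established there in the \emph{axisymmetric} class, and since axisymmetric $2\pi$-periodic perturbations are a subclass of the axially-definable $2\pi$-periodic class considered here, instability of $\Cr$ in the larger class is immediate. You instead propose to apply ``the normal instability counterpart of the PSZ theorem'' directly, pointing to the eigenvalue $r^{-2}-1 > 0$ from the $(\pm1, 0)$-mode. But the PSZ instability theorem for normally hyperbolic equilibria requires the same structural hypotheses (i)--(iii) used in the stability direction: in particular, that \emph{all} equilibria near $\rho = 0$ form a $C^1$-manifold of dimension $\dim N(DG(0)) = 3$ with tangent space $N(DG(0))$. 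Theorem~\ref{EquilMfd} establishes this only for $r > 1$, using a center-manifold construction that needs to be rerun for $0 < r < 1$ (where the unstable subspace is now nontrivial, so the center manifold argument changes). You would need to verify that for $0 < r < 1$ the local equilibrium set near $\rho=0$ is still exactly $\mathcal{M}_{cyl}$, ruling out nearby bifurcating CMC surfaces; this is not a freebie since a bifurcation is known to occur at $r=1$. The paper's route sidesteps this entirely and is both shorter and safer. If you want to keep your direct argument, you must either redo the equilibrium-manifold analysis for $0 < r < 1$ or use an instability criterion that only needs an isolated unstable eigenvalue and does not presuppose the local equilibrium manifold structure.

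One further small caution, common to both your write-up and the paper's statement: the exponential rate bound $\omega \in (0, 1-r^{-2})$ implicitly treats $(\pm1,0)$ as the slowest-decaying mode, but for larger radii the $(0,\pm2)$-mode contributes the eigenvalue $-12r^{-4}$, which has smaller magnitude than $1-r^{-2}$ once $r>2$; the achievable decay rate is really governed by $\min(1-r^{-2},\, 12 r^{-4})$. Your phrase ``allows any prescribed decay rate $\omega\in(0,1-r^{-2})$'' reproduces the paper's claim, so this is not a defect of your proof relative to the paper, but it is worth being aware of.
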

\begin{proof}
(a) Lemma~\ref{spectrum} and Theorem~\ref{EquilMfd} show that
$\Cr$ is \emph{normally stable}, i.e. we have with $A=DG(0)$
\begin{itemize}
\item[{\rm (i)}] \,near $\rho=0$ the set $\mathcal E$ of equilibria constitutes a $C^1$-manifold 
in $bc^{4+\alpha}_{2\pi,per}(\Cr)$ of dimension~$3$,
\item[{\rm (ii)}] \, the tangent space for $\mathcal E$ at $0$ is given by $N(A)$,
\item[{\rm (iii)}] \, $0$ is a semi-simple eigenvalue of
                    $A$, i.e.\ $ N(A)\oplus R(A)=bc^\alpha_{2\pi,per}(\Cr)$,
\item[{\rm(iv)}] \, $\sigma(A)\setminus\{0\}\subset {\mathbb C}_{-}=\{z\in\C:\, {\rm Re}\, z<-\omega\}$ for some $\omega>0$
\end{itemize}
and exponential stability follows from \cite[Theorem 3.1]{PSZ09}.
To determine the radius $\bar{r}$ of the limit $\brho$, note that
Theorem~\ref{conservation} implies the equation 
${\rm Vol}(\brho) = {\rm Vol} (\rho_0)$ must be satisfied.
However, there is a one--to--one correspondence between the 
enclosed volume Vol$(\brho)$ (as defined by \eqref{volume-periodic})
and the radius $\bar{r}$ of $\brho$, hence $\bar{r}$ is uniquely
determined by volume preservation.
\smallskip\\
(b) Nonlinear instability of cylinders $\Cr$ with 
radius $0 < r < 1$ has already been shown in \cite[Theorem 5.1]{LS13}. 
To be precise, the authors proved 
this result in \cite{LS13} within the class of 
{\em axisymmetric} perturbations of the cylinder.
However, these perturbations also reside in the class of 
axially--definable surfaces, so nonlinear instability of 
the cylinder has already been established
in the current setting.
\end{proof}
\begin{rem}
(a) We remark that the proof of the  exponential stability result 
can also be based on a center manifold analysis, as in \cite[Theorem 1.2]{EMS98}. 
\smallskip\\
(b) 
There is a striking difference between the situation in 
\cite{EMS98} and the current situation concerning equilibria for the surface diffusion flow.
While it can readily be inferred that (all embedded, compact, closed) equilibria
of \eqref{SDF} are spheres (single spheres, or multiple spheres of the same radius),
the existence and classification of equilibria in the class of axially-definable surfaces 
is more difficult and complex. In the special case of 
\emph{axisymmetric and periodic} surfaces, equilibria are characterized 
by the Delaunay surfaces, see \cite{LS13}.
The general case for \emph{non--axisymmetric} surfaces seems to be wide open.
\smallskip\\
(c) Theorem 4.3 is interesting on its own, as it characterizes all 
\emph{axially-definable periodic} equilibria that are close to $\Cr$ for $r>1$. 
\smallskip\\
(d) We note that the radius of the cylinder $C(\bar y, \bar z, \bar r)$ to which
perturbations converge (when $r > 1$) is easily determined by the fact that 
surface diffusion flow preserves enclosed volume.
However, we cannot predict the location of the central axis 
$(\bar{y}, \bar{z})$.
\end{rem}

\section{Neumann-type boundary conditions}

It is also common to consider surfaces satisfying Neumann--type 
boundary conditions in the axial direction, c.f. \cite{ATH97, HAR12}.
We show in this section that the well--posedness and stability results
stated above continue to hold in subspaces of $bc^{\sigma}(\Cr)$ 
satisfying such boundary conditions.

Consider the restricted cylinder $\Cr[r,a]$ as in 
\eqref{restricted-cylinder} and define $bc^{\sigma}_{N}(\Cr[r,a])$
(for $\sigma > 3$) to be the collection of height functions
$\trho : \Cr[r,a] \to \R$ satisfying the boundary conditions
\[
		\partial_x \trho(0+, \theta) = \partial_x \trho(a-,\theta)=0 \quad\text{and}\quad 
		    \partial_x^3 \trho(0+,\theta) = \partial_x^3 \trho(a-,\theta)=0,
\]
for all $\theta \in \Tone$, and such that $\trho$ is $bc^{\sigma}$ 
regular up to the closed boundaries of $\Cr[r,a]$. 
It follows directly from \cite{Am13} and \cite{Am12} that
$bc^{\sigma}(\Cr[r,a])$ is a Banach space of
functions over the manifold with boundary $\Cr[r,a]$
It then follows that $bc^{\sigma}_N(\Cr[r,a])$ is a closed
subspace of $bc^{\sigma}(\Cr[r,a])$, and is hence a Banach space.

Before stating results for \eqref{NASD} in the setting of Neumann boundary
conditions, we first note that $bc^{\sigma}_N(\Cr[r,a])$ is linearly isomorphic 
to a closed subspace of $bc^{\sigma}_{2a,per}(\Cr)$ via the even extension operator
\[
  \rho(x, \theta) = \psi(\trho) (x, \theta) := 
  \begin{cases}  
      \trho(2k a - x, \theta) & 
          \text{if $x \in [(2k - 1) a, 2k a]$, $k \in \mathbb{Z}$}\\
      \trho(x - 2k a, \theta) &
          \text{if $x \in [2k a, (2k + 1)a]$, $k \in \mathbb{Z}$}.
  \end{cases}
\]
Thus, all results derived above in the axial periodic setting can also
be applied to surfaces satisfying Neumann
boundary conditions via restriction.

\begin{prop}\label{NeumannWellPosedness} (well--posedness for Neumann
boundary conditions)
Fix $\alpha \in (0,1), \ a > 0$ and take 
$\mu \in [3/4, 1]$ so that $4\mu + \alpha \notin \mathbb{Z}$.
For each initial value 
\[
	\trho_0 \in V_{\mu,N} := bc^{4 \mu + \alpha}_{N}(\Cr[r,a]) \cap [\rho > - r],
\]
there exists a unique maximal solution
\[
	\trho(\cdot, \trho_0) \in C^1_{1 - \mu} \big(J(\trho_0), bc^{\alpha}_{N}(\Cr[r,a]) \big) 
		\cap C_{1 - \mu} \big(J(\trho_0), bc^{4 + \alpha}_{N}(\Cr[r,a]) \big).
\]
to \eqref{NASD}. It follows that 
\[
    \mathcal{D} := \bigcup_{\trho_0 \in V_{\mu,N}} J(\trho_0) \times \{ \trho_0 \}
\]
is open in $\R_+ \times V_{\mu,N}$ and the map
$[(t, \trho_0) \mapsto \trho(t, \trho_0)]$ defines an analytic semiflow on $V_{\mu,N}$.
Moreover, if there exists $0 < M < \infty$ so that, for all $t \in J(\trho_0)$,
\begin{enumerate}
        \vspace{2mm}
	\item $\displaystyle \trho(t, \trho_0)(p) \ge 1/M - r \quad 
			\text{for all $p \in \Cr[r,a]$, and}$
        \vspace{2mm}
	\item $\displaystyle \| \trho(t, \trho_0) \|_{bc^{4 \mu + \alpha}_{N}(\Cr[r,a])} \le M$,
\end{enumerate}
then it must hold that $t^+(\trho_0) = \infty$ and $\trho(\cdot, \trho_0)$ is a
\textit{global solution}.
\end{prop}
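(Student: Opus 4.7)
My plan is to transplant the $2a$-periodic well-posedness from Proposition~\ref{PeriodicWellPosedness} to the Neumann setting via the even extension operator $\psi$, exploiting the reflection invariance of \eqref{NASD}. First I would verify that $\psi$ maps $V_{\mu,N}$ linearly and bi-continuously onto the closed subspace of $bc^{4\mu+\alpha}_{2a,per}(\Cr)\cap[\rho>-r]$ consisting of functions invariant under each reflection $R_k:(x,\theta)\mapsto(2ka-x,\theta)$, $k\in\mathbb{Z}$. The hypothesis $\mu\ge 3/4$ is exactly what is needed to make the two boundary conditions on $\partial_x\trho$ and $\partial_x^3\trho$ pointwise meaningful (since $4\mu+\alpha>3$), and these are precisely the conditions that allow the even extension to retain $bc^{4\mu+\alpha}$ regularity across each reflection line: even-order $x$-derivatives match automatically, while the vanishing of the first and third $x$-derivatives at $x=0,a$ provides compatibility of the odd-order derivatives up to order~$3$.

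Given $\trho_0\in V_{\mu,N}$, I would then set $\rho_0:=\psi(\trho_0)$ and apply Proposition~\ref{PeriodicWellPosedness} with period $2a$ to obtain a unique maximal $2a$-periodic solution $\rho(\cdot,\rho_0)$ of \eqref{NASD}. The crucial step is to show that $\rho(t,\cdot)$ inherits the reflection symmetry of $\rho_0$. The operator $G$ in \eqref{EvolOperator} is built entirely from intrinsic geometric data of $\Gamma(\rho)$ (mean curvature, induced metric, and Laplace--Beltrami operator), all of which are invariant under the rigid reflection of $\R^3$ across $\{x=0\}$. Equivalently, setting $(R\rho)(x,\theta):=\rho(-x,\theta)$, a parity check confirms that each monomial of $G$ contains an even total number of $\partial_x$'s (counting those hidden inside the coefficients and inside $\mathcal{H}(\rho)$ and $\sqrt{\g(\rho)}$), yielding $G\circ R=R\circ G$. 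Thus $R\rho(\cdot,\rho_0)$ solves \eqref{NASD} with initial data $R\rho_0=\rho_0$, and uniqueness forces $R\rho(t,\cdot)=\rho(t,\cdot)$. Combined with $2a$-periodicity this gives invariance under every $R_k$, so $\rho(t,\cdot)\in\psi(bc^{4+\alpha}_N(\Cr[r,a]))$ for all $t\in J(\rho_0)$.

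Defining $\trho(t,\trho_0):=\psi^{-1}\rho(t,\rho_0)$, every assertion of the proposition---existence, uniqueness, the regularity class, openness of $\mathcal{D}$, analyticity of the semiflow, and the global-existence criterion (1)--(2)---transfers through the linear isomorphism $\psi$; the Neumann conditions at $x=0,a$ are automatic, as $\partial_x\rho$ and $\partial_x^3\rho$ are odd about each reflection point, and the conditions (1)--(2) on $\trho$ are exactly the $\psi^{-1}$-images of the conditions (1)--(2) in Proposition~\ref{PeriodicWellPosedness}. The main obstacle I anticipate is verifying $G\circ R=R\circ G$ rigorously from the explicit formula \eqref{EvolOperator}; this is a careful but routine parity bookkeeping with a clean geometric justification (reflection invariance of the surface-diffusion flow itself), and once it is in hand the rest is a direct transcription of the periodic result.
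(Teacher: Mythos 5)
Your proposal matches the paper's proof essentially step for step: even extension $\psi$ into the $2a$-periodic admissible class, invocation of Proposition~\ref{PeriodicWellPosedness}, the parity argument $G\circ R = R\circ G$ (the paper likewise notes that every odd $x$-derivative in the expansion of $G$ occurs paired with another such factor), uniqueness to conclude $R\rho(t)=\rho(t)$ and $RT_{-a}\rho(t)=T_{-a}\rho(t)$, and restriction to $\Cr[r,a]$. Your explanation of the role of $\mu\ge 3/4$ --- ensuring $4\mu+\alpha>3$ so the $\partial_x^3$ boundary trace is meaningful and the even extension preserves $bc^{4\mu+\alpha}$ regularity across reflection lines --- is a correct observation that the paper leaves implicit.
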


\begin{proof}
Given $\trho_0 \in V_{\mu,N}$, the even extension $\rho_0 := \psi(\trho_0)$ 
is in the admissible class $V_{\mu, per}$ of $2a$--periodic functions on $\Cr$,
and hence Proposition~\ref{PeriodicWellPosedness} gives existence, uniqueness,
and global well--posedness of a periodic solution 
\[
	\rho(\cdot,\rho_0) \in C^1_{1 - \mu} \big(J(\rho_0), bc^{\alpha}_{2a,per}(\Cr) \big) 
		\cap C_{1 - \mu} \big(J(\rho_0), bc^{4 + \alpha}_{2a,per}(\Cr) \big)
\]
to \eqref{NASD} on $\Cr$. It remains to show that, upon restriction to $\Cr[r,a]$,
the solution $\rho(t,\rho_0)$ satisfies desired Neumann boundary conditions 
for all $t \in J(\rho_0)$.

Define the reflection operator $R$ about the $x=0$ plane so that
\[
	[R\rho](x,\theta) := \rho(-x,\theta).
\]
Then the restriction $\trho(t) := \rho(t) \big|_{\Cr[r,a]}$ satisfies the
desired boundary conditions if and only if 
\[
R\rho_x = \rho_x, \quad 
R\rho_{xxx} = \rho_{xxx}, \quad 
R(T_{-a}\rho_x) = T_{-a}\rho_x, \quad 
\text{and} \quad 
R(T_{-a}\rho_{xxx}) = T_{-a}\rho_{xxx} \ ,
\]
where $T_{-a}$ is the axial shift operator discussed in section~\ref{SubsecPerWP}.

From a geometric perspective, it is immediately clear that reflections about $x=0$
commute with the geometric calculations used to compute the evolution operator $G$,
however, confirming commutativity of $R$ and $G$ algebraically 
requires a little finesse.
In particular, note that $R$ commutes with the operations:
\[
    [\rho \mapsto 1/\rho], \quad 
    [\rho \mapsto \partial^k_{\theta} \rho], k \in \mathbb{N} \quad \text{and} \quad
    [(\rho, h) \mapsto \rho h],
\]
while 
\[
    R \, \partial_x^k = (-1)^k \partial_x^k R.
\]
However, one should observe that all odd axial derivatives of $\rho$
produced in the operator $G$ are found in products with an even number of 
similar terms. For instance, $\rho_x^2$,  $\rho_x \rho_{xxx}$, and $\rho_x \rho_{x \theta}$
are three such products found in the expansion of $G(\rho)$.
In this way, one confirms that the operators $R$ and $G$ indeed commute.

Finally, let $\rho_R(t) := R \rho(t)$ and $\rho_{R,a}(t) := R T_{-a} \rho(t)$ 
for $t\in J(\rho_0)$.
Then 
\begin{equation*}
\begin{aligned}
	&\partial_t \rho_R(t) = R \partial_t \rho(t) = 
            R G(\rho(t)) = G(R\rho(t)), \quad t \in J(\rho_0), \\
        &\rho_R(0) = R\rho(0) = R\rho_0 = \rho_0,
\end{aligned}
\end{equation*}
showing that $\rho(\cdot)$ and $\rho_R(\cdot)$ are both solutions of \eqref{NASD} 
with the same initial value.
By uniqueness, $R \rho(t) = \rho(t)$, 
implying that $\rho(t)$ satisfies Neumann boundary conditions at $x=0$
for all $t \in J(\rho_0)$.
Similarly, note that $\rho_{R,a}(t)$ and $T_{-a} \rho(t)$ are both solutions to 
\begin{equation*}
	\partial_t \rho(t) = G(\rho(t)), \; t \in J(\rho_0), 
	\qquad \rho(0) = T_{-a} \rho_0,
\end{equation*}
so that $\rho(t)$ also satisfies Neumann boundary conditions at $x=a$ and thus,
by restriction, we have a solution
\[
    \trho(t) := \rho(t) \big|_{\Cr[r,a]}
\]
satisfying Neumann boundary conditions on $\Cr[r,a],$ as desired.
\end{proof}

\begin{rem}
(a) Regarding statements of stability in the setting of Neumann boundary
conditions, 
we note that stability of cylinders was derived for $2\pi$--periodic 
perturbations of the unbounded cylinder $\Cr$.
Thus, by fixing $a = \pi$, one can derive stability and instability
of the bounded cylinder $\Cr[r,\pi]$ under perturbations satisfying
Neumann boundary conditions, with stability holding for $r > 1$
and instability for $0 < r < 1$ as before.
\smallskip\\
(b) We also note that the fundamental interval of periodicity has been taken
to be fixed at $2 \pi$ throughout the article, however this is only a 
convenience. As discussed in \cite[Remarks 6.2]{LeC14}, one can take 
perturbations of any periodicity $a > 0$, and derive analogous stability
of cylinders with radius $r > a / 2\pi$ and instability for cylinders
of radius $0 < r < a / 2\pi$. 
\end{rem}

\bigskip
{\bf Acknowledgment:}
The authors would like to thank Christoph Walker and Yuanzhen Shao for 
helpful discussions. This work was partially supported by
a grant from NSF (DMS-1265579 to G.~Simonett).
\bigskip
\bigskip

\end{document}